\documentclass[a4paper,12pt]{amsart}

\usepackage{amssymb}
\usepackage{amsmath}
\usepackage{amsthm}
\usepackage{mathtools}
\usepackage{enumitem}
\usepackage{graphicx}
\usepackage{tikz}
\usepackage{mathrsfs}
\usepackage[mathscr]{euscript}
\usepackage{tkz-graph}

\usepackage{caption}
\usepackage[skip=1ex, belowskip=2ex]{subcaption}
\usepackage[export]{adjustbox}

\usetikzlibrary{arrows}
\usetikzlibrary{tikzmark}
\usetikzlibrary{cd}
\usetikzlibrary{calc}
\graphicspath{ {.} }

\usepackage[english]{babel}

\newcommand{\innerthmname}{}

\theoremstyle{definition}

\usetikzlibrary{positioning}
\newtheorem{theorem}[equation]{Theorem}
\newtheorem{lemma}[equation]{Lemma}

\newtheorem{corollary}[equation]{Corollary}

\theoremstyle{definition}
\newtheorem{definition}[equation]{Definition}

\theoremstyle{remark}

\newtheorem{remark}[equation]{Remark}

\numberwithin{equation}{section}
\usepackage{fancyhdr}
\usepackage{amsfonts}
\usepackage{hyperref}
\usepackage{graphicx, xcolor} 
\usepackage{amsthm}

\usepackage[
margin=1in,
marginpar=2cm,
includefoot,
footskip=30pt,
]{geometry}

\makeatletter
\@namedef{subjclassname@2020}{%
	\textup{2020} Mathematics Subject Classification}
\makeatother

\usepackage{scalerel,stackengine}
\stackMath
\newcommand\reallywidehat[1]{%
	\savestack{\tmpbox}{\stretchto{%
			\scaleto{%
				\scalerel*[\widthof{\ensuremath{#1}}]{\kern-.6pt\bigwedge\kern-.6pt}%
				{\rule[-\textheight/2]{1ex}{\textheight}}
			}{\textheight}%
		}{0.5ex}}%
	\stackon[1pt]{#1}{\tmpbox}%
}

\keywords{Bunce-Deddens algebras, Leavitt labelled path algebras, crossed products, odometer action, stably finite, Morita equivalence}
\subjclass[2020]{Primary: 16S35, Secondary: 16S88, 16D90}

\title[$R$-Bunce-Deddens Algebras]{$R$-Bunce-Deddens Algebras}
\author[A. Zhang]{Allen Zhang}
\email{allenusca@gmail.com}

\begin{document}
	\begin{abstract}
        We construct an $R$-algebraic analog of the Bunce-Deddens algebra using the theory of Leavitt labelled path algebras. In particular, we will construct a labelled space whose associated partial action on tight filters is exactly the odometer action on the Cantor set that induces the classical Bunce-Deddens algebra. After defining $R$-Bunce-Deddens algebras, we will prove $R$-algebraic analogs of various results for classical Bunce-Deddens algebras. As our primary application of these results, we will show that, for any field $K$, a $K$-Bunce-Deddens algebra is not Morita equivalent to any Leavitt path algebra.
	\end{abstract}
	\maketitle
	
	\section{Introduction}
    Bunce-Deddens algebras were first studied by Bunce and Deddens in \cite{bunce1975family} and have since been extensively researched \cite{hawkins2013spectral,orfanos2010generalized,rout2017classification}. Particularly relevant to this paper is a well-known result \cite{davidson1996c, exel1994bunce} that constructs the Bunce-Deddens algebra as a crossed product arising from an odometer action on a Cantor set.

    In the study of $C^{\ast}$-algebras, there has been considerable interest in properties of specific classes of $C^{\ast}$-algebras. In particular, graph $C^{\ast}$-algebras have received a large amount of attention since their introduction \cite{cuntz1980class,kumjian1998cuntz}. Because of the success of graph $C^{\ast}$-algebras, Bates and Pask introduced the more general combinatorial class of labelled spaces and an associated class of labelled space $C^{\ast}$-algebras \cite{bates2007c, bates2009c}.

    During the study of combinatorial $C^{\ast}$-algebras, it was discovered that an algebraic analog preserved many interesting properties of these algebras. From this, the Leavitt path algebra was defined independently in \cite{abrams2005leavitt} and \cite{ara2004fractional} and has since grown into an important field of algebra in its own right. Eventually, an $R$-algebraic analog of the labelled space $C^{\ast}$-algebra was defined in \cite{Boava2021LeavittPA} and termed a Leavitt labelled path algebra.

	In this paper, we will define an $R$-algebraic analog of the Bunce-Deddens algebra $B_R(\{n_k\})$ from a Leavitt labelled path algebra. We will use the theory developed for Leavitt labelled path algebras to show that this is a natural analog to the $C^{\ast}$-algebraic case by proving that $B_R(\{n_k\})$ can be realized as the algebraic analog of the crossed product that defines the Bunce-Deddens $C^{\ast}$-algebra. 
    
    After defining these algebras, we will prove the following results for $R$-Bunce-Deddens algebras:
    \begin{enumerate}
    \item (Theorem~\ref{simplicitytheorem}) When $R$ is a field, $B_R(\{n_k\})$ is simple.
    \item (Theorem~\ref{matrixalgebrarep}) $B_R(\{n_k\})$ is the direct limit of matrix rings over the Laurent polynomial ring.
    \item (Corollary~\ref{notlocallymatricialtheorem}) When $R$ is a field, $B_R(\{n_k\})$ is not locally matricial (with respect to $R$).
    \item (Theorem~\ref{stablyfinitethm}) $B_R(\{n_k\})$ is stably finite.
    \item (Theorem~\ref{notvonneumannregular}) $B_R(\{n_k\})$ is not von Neumann regular.
    \end{enumerate}
    These are all analogs of results known in the $C^{\ast}$-algebraic case.

    As our main application, we will prove that for any unital commutative ring $R$, directed graph $G$, and field $K$, there exists no Leavitt path algebra $L_R(G)$ that is Morita equivalent to $B_K(\{n_k\})$. In doing so, we show that the class of Leavitt labelled path algebras is strictly larger than the class of Leavitt path algebras, even up to Morita equivalence. This is the algebraic equivalent of a result known for graph and labelled space $C^{\ast}$-algebras \cite{jeong2017finite, kim2016unique}.
	
	\section{Constructing the $R$-Algebra}

	We will briefly review the definition of weakly left-resolving normal labelled spaces and their associated Leavitt labelled path algebras from \cite{Boava2021LeavittPA}. Throughout, let $R$ be a unital commutative ring.
	
    A labelled graph is a directed graph $\mathcal E = (\mathcal E^0, \mathcal E^1)$ paired with a labelling $\mathcal L: \mathcal E^1 \rightarrow \mathcal A$ where $\mathcal A$ is an alphabet. Labelling extends to finite-length paths in $\mathcal E^{\ast}$.
	
	We denote $\mathcal L^{\ast} = \mathcal L(\mathcal E^{\ast})$. For any subset $A \subseteq \mathcal E^0$ and $\alpha \in \mathcal L^{\ast}$, we define $r(A, \alpha)$ to be the range of paths $p = e_1\ldots e_{|\alpha|}$ that start in $A$. More precisely, $r(A, \alpha) = \{r(p) \colon p \in \mathcal E^{\ast}, \mathcal L(p) = \alpha, s(e_1) \in A\}$. We define $r(\alpha) = r(\mathcal E^0, \alpha)$.
	
	Let $\mathcal B$ be a set of subsets of $\mathcal E^0$ closed under finite unions and intersections that contains $r(\alpha)$ for all $\alpha \in \mathcal L^{\ast}$ and such that for all $A \in \mathcal B$ and $\alpha \in \mathcal L^{\ast}$ we have that $r(A, \alpha) \in \mathcal B$.
	
	We call a triple $(\mathcal E, \mathcal L, \mathcal B)$ a \textit{labelled space}.
	
	A labelled space is \textit{weakly left-resolving} if for all $A, B \in \mathcal B$ and $\alpha \in \mathcal L^{\ast}$, we have that $r(A \cap B, \alpha) = r(A, \alpha) \cap r(B, \alpha)$. A labelled space is \textit{normal} if $\mathcal B$ is closed under relative complements. We assume that all labelled spaces considered henceforth are weakly left-resolving and normal and just refer to them as labelled spaces.
	
	For $\alpha \in \mathcal L^{\ast}$, we define $\mathcal B_{\alpha} = \mathcal B \cap \mathcal P(r(\alpha))$. Define $\mathcal B_{\omega} = \mathcal B$. $\mathcal B_{\alpha}$ is an ideal of $\mathcal B$.
	
	For $A \in \mathcal B$, define $\Delta_A = \{a \in \mathcal A \colon r(A, a) \neq \emptyset\}$. Let $\mathcal E_{\text{sink}}$ be the sinks of $\mathcal E$ (as a directed graph). Note that $\mathcal E_{\text{sink}}$ is not necessarily in $\mathcal B$.
	
	For $A \in \mathcal B$, call $A$ \textit{regular} if $0 < |\Delta_A| < \infty$ and there exists no $\emptyset \neq B \in \mathcal B$ such that $B \subseteq A \cap \mathcal E_{\text{sink}}$. If $A$ is not regular, we refer to it as \textit{singular}. We denote $\mathcal B_{\text{reg}} \subseteq \mathcal B$ to be the regular sets. $\mathcal B_{\text{reg}}$ is an ideal of $\mathcal B$.
	
	The associated Leavitt labelled path algebra $L_R(\mathcal E, \mathcal L, \mathcal B)$ is defined as the associative $R$-algebra on the generators $\{p_A\}_{A \in \mathcal B} \cup \{s_a, s_a^{\ast}\}_{a \in \mathcal A}$ that satisfy the following relations, which are often known as the Cuntz-Krieger relations.
	
	\begin{enumerate}
		\item $p_{A \cap B} = p_A p_B$, $p_{A \cup B} = p_{A} + p_B - p_{A \cap B}$, $p_{\emptyset} = 0$ for $A, B \in \mathcal B$
		\item $p_A s_a = s_a p_{r(A, a)}$ and $s_a^{\ast}p_A = p_{r(A, a)}s_a^{\ast}$ for $A \in \mathcal B$ and $a \in \mathcal A$
		\item $s_a^{\ast}s_b = \delta_{a, b} p_{r(a)}$ for $a, b \in \mathcal A$
		\item $s_a s_a^{\ast} s_a = s_a$ and $s_a^{\ast}s_as_a^{\ast} = s_a^{\ast}$ for $a \in \mathcal A$.
		\item For all $A \in \mathcal B_{\text{reg}}$, $p_A = \sum_{a \in \Delta_A} s_a p_{r(A, a)}s_a^{\ast}$
	\end{enumerate}

    We now construct our $R$-Bunce-Deddens algebras $B_R(\{n_k\})$ from a Leavitt labelled path algebra.

	\begin{definition}
    Let $\{n_k\}_{k=0}^\infty$ be a strictly increasing sequence of positive integers such that $n_k \mid n_{k+1}$ for $k \geq 0$. Define the vertices $\mathcal E^0 = \mathbb Z$ and let the set of edges be $\{(x, x+1)\}_{x \in \mathbb Z}$ with all edges labelled with a single label $a$, so $\mathcal A = \{a\}$. For $k \geq 0$, define $C_{(k, i)} = i + n_k \mathbb Z$ for $i \in [0, n_k)$. Let $\mathcal B$ be the collection of all finite unions of sets $C_{(k, i)}$ for $k \geq 0$ and $i \in [0, n_k)$. We define \[B_R(\{n_k\}) \coloneqq L_R(\mathcal E, \mathcal L, \mathcal B).\]
	\end{definition} 

	Throughout the rest of this section, fix the sequence $\{n_k\}_{k=0}^{\infty}$. We will prove that $(\mathcal E, \mathcal L, \mathcal B)$ is a weakly left-resolving labelled space. The following lemmas are easy to prove.

	\begin{lemma}\label{repdyadic}
	For every $B \in \mathcal B$, there exists some $k(B)$ such that for all $k \geq k(B)$, we can write $B = \bigsqcup_{i \in S} C_{(k, i)}$ for some $S \subseteq \{0, \ldots, n_k-1\}$.
	\end{lemma}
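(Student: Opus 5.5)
We argue directly from the definition of $\mathcal B$: every $B \in \mathcal B$ is a finite union $B = \bigcup_{j=1}^m C_{(k_j, i_j)}$ with $k_j \geq 0$ and $i_j \in [0, n_{k_j})$. (If $B = \emptyset$, take $S = \emptyset$ and any $k(B)$.) We set $k(B) = \max_j k_j$. The proof rests on a refinement identity for a single cylinder set, which is then applied to each piece of the union.

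The key observation is this identity. Since $n_k \mid n_{k'}$ whenever $k \leq k'$ (by induction on the divisibility hypothesis $n_k \mid n_{k+1}$), for $k \leq k'$ and $i \in [0, n_k)$ we will show
\[
C_{(k,i)} = i + n_k\mathbb Z = \bigsqcup_{t=0}^{n_{k'}/n_k - 1} \bigl(i + t n_k + n_{k'}\mathbb Z\bigr) = \bigsqcup_{t=0}^{n_{k'}/n_k-1} C_{(k', i + t n_k)}.
\]
To check this, we verify two things.
\begin{enumerate}
\item Each $i + t n_k$ lies in $[0, n_{k'})$, because $i + t n_k \leq n_k - 1 + (n_{k'}/n_k - 1)n_k = n_{k'} - 1$.
\item The residues $i + t n_k \bmod n_{k'}$ are exactly the residues mod $n_{k'}$ that are congruent to $i$ mod $n_k$.
\end{enumerate}
The second point is the standard fact that each residue class mod $n_k$ splits into $n_{k'}/n_k$ classes mod $n_{k'}$. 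Disjointness is automatic, since distinct residue classes mod $n_{k'}$ are disjoint.

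Now fix $k \geq k(B)$. Applying the identity to each $C_{(k_j, i_j)}$ with $k' = k$ expresses every piece as a disjoint union of sets $C_{(k, s)}$. Hence $B$ is the union of the sets $C_{(k,s)}$ with $s$ in
\[
S = \{\, s \in [0, n_k) : s \equiv i_j \pmod{n_{k_j}} \text{ for some } j \,\}.
\]
Because the sets $C_{(k,s)}$ for distinct $s \in [0, n_k)$ are pairwise disjoint, taking the union over the set $S$ rather than over a multiset makes it a disjoint union, even when the original pieces overlapped. This gives $B = \bigsqcup_{i \in S} C_{(k,i)}$ for every $k \geq k(B)$.

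There is no real obstacle here. The only point needing care is that the original pieces of $B$ may overlap, and passing to the set $S$ resolves this. Transitivity of divisibility along the sequence is where the hypothesis $n_k \mid n_{k+1}$ is used.
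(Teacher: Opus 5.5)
Your proof is correct. The paper gives no proof of this lemma, only remarking that it is ``easy to prove'', and your refinement identity $C_{(k,i)} = \bigsqcup_{t} C_{(k', i+tn_k)}$ for $k \le k'$, with $k(B)$ taken as the largest level in a chosen representation and the overlaps removed by passing to the residue set $S$, is exactly the routine argument that remark leaves to the reader.
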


	\begin{lemma}\label{rangecalc}
	Let $B = \bigsqcup_{i \in S} C_{(k, i)} \in \mathcal B$ for $S \subseteq \{0, \ldots, n_k-1\}$. We have that $r(B, a) = \bigsqcup_{i \in S}C_{(k, i+1)}$ where $i+1$ wraps around to $0$ when $i = n_k-1$.
	\end{lemma}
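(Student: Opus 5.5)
The plan is to compute $r(B,a)$ straight from the definition of the range map. By the definition, $r(B,a)$ is the set of ranges of edges whose source lies in $B$ and whose label is $a$. Every edge carries the label $a$, and the edges are exactly $(x,x+1)$ for $x\in\mathbb Z$, so each vertex $x$ is the source of exactly one edge and the range of that edge is $x+1$. This gives
\[
r(B,a)=\{x+1 : x\in B\}=B+1,
\]
the translate of $B$ by one.

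Next, since translation by $1$ is a bijection of $\mathbb Z$, it commutes with disjoint unions. Hence
\[
r(B,a)=\bigsqcup_{i\in S}\bigl(C_{(k,i)}+1\bigr),
\]
and the pieces on the right remain pairwise disjoint.

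It then suffices to compute a single translate: $C_{(k,i)}+1=(i+1)+n_k\mathbb Z$. For $i<n_k-1$ this is $C_{(k,i+1)}$ directly. For $i=n_k-1$ it equals $n_k+n_k\mathbb Z=n_k\mathbb Z=C_{(k,0)}$, which is the wraparound case in the statement. Combining these gives the claimed formula.

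There is no real obstacle here. The only points needing care are that the graph has exactly one out-edge per vertex, which is what makes $r(B,a)$ a translate of $B$, and that the residue $i=n_k-1$ is handled separately.
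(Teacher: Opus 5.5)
Your proposal is correct. The paper states this lemma without proof, calling it easy, and your direct computation is exactly the routine verification intended: first $r(B,a)=B+1$ from the definition of $r(A,\alpha)$, then translating each residue class, with the wrap-around at $i=n_k-1$ handled separately.
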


    \begin{lemma}
    Let $A \in \mathcal B$ and $m \geq 0$. There exists exactly one $A' \in \mathcal B$ such that $r(A', a^m) = A$. Motivated by this, we define $r(A, a^{-m}) = A'$.
    \end{lemma}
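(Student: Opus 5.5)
The claim is that, for $A \in \mathcal B$ and $m \geq 0$, there is exactly one $A' \in \mathcal B$ with $r(A', a^m) = A$. The plan is to reduce everything to the shift $x \mapsto x+1$ on $\mathbb Z$. This shift is a bijection, and it permutes the residue classes $C_{(k,i)}$ for each fixed $k$.

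\textbf{Step 1: $r(\cdot, a^m)$ is translation.} Every vertex $x$ has exactly one outgoing edge, to $x+1$, and every edge carries the label $a$. The only path labelled $a^m$ starting at $x$ therefore ends at $x+m$. Hence for any $A' \subseteq \mathbb Z$,
\[ r(A', a^m) = A' + m = \{x + m \colon x \in A'\}. \]
This also follows from iterating Lemma~\ref{rangecalc}, using $r(A', a^{m}) = r(r(A', a^{m-1}), a)$.

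\textbf{Step 2: existence.} By Lemma~\ref{repdyadic}, write $A = \bigsqcup_{i \in S} C_{(k,i)}$ for some $k$ and some $S \subseteq \{0, \ldots, n_k - 1\}$. Set
\[ A' = \bigsqcup_{i \in S} C_{(k, i - m \bmod n_k)}. \]
This is a finite union of sets $C_{(k,j)}$, so $A' \in \mathcal B$. Since $C_{(k,j)} + m = C_{(k, j+m \bmod n_k)}$, Step 1 gives $r(A', a^m) = A' + m = A$. The case $A = \emptyset$ is included by taking $S = \emptyset$.

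\textbf{Step 3: uniqueness.} Suppose $A', A'' \in \mathcal B$ both satisfy $r(A', a^m) = r(A'', a^m) = A$. Then $A' + m = A'' + m$, and since translation by $m$ is injective on subsets of $\mathbb Z$, $A' = A''$. Equivalently, $A'$ is forced to be $A - m$.

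There is no real obstacle here. The only point needing care is verifying Step 1 directly from the definition of $r(A, \alpha)$: one must note that in this graph labelled paths are determined by their source. With that in hand, closure of $\mathcal B$ under translation by $-m$ follows immediately from the residue class description in Lemma~\ref{repdyadic}.
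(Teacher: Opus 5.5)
Your proof is correct. The paper gives no proof of this lemma and only lists it among results that are ``easy to prove''; your argument is exactly the intended one and is the $m$-fold iterate of Lemma~\ref{rangecalc}: $r(\cdot, a^m)$ is translation by $m$, a bijection of $\mathbb Z$ sending each $C_{(k,j)}$ to $C_{(k, j+m \bmod n_k)}$, so $A' = A - m$ lies in $\mathcal B$ and is unique.
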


	\begin{theorem}
	$(\mathcal E, \mathcal L, \mathcal B)$ is a weakly left-resolving normal labelled space.
	\end{theorem}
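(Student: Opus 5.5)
We have to verify four things in turn: $\mathcal B$ is closed under finite unions and intersections, $\mathcal B$ contains every $r(\alpha)$ and every $r(A,\alpha)$, $\mathcal B$ is closed under relative complements (normality), and the labelled space is weakly left-resolving. The common tool is Lemma~\ref{repdyadic}. Given finitely many $A_1,\dots,A_m \in \mathcal B$, take $k \geq \max_j k(A_j)$. Then each $A_j$ is a disjoint union $\bigsqcup_{i\in S_j} C_{(k,i)}$ over the same level $k$. Since $n_k \mid n_{k+1}$, each $C_{(k,i)}$ splits as a disjoint union of sets $C_{(k+1,i')}$, and the sets $C_{(k,i)}$ for $i\in[0,n_k)$ partition $\mathbb Z$. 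Set operations on such $A_j$ therefore reduce to the same operations on the index sets $S_j$. So $A\cup B$, $A\cap B$ and $A\setminus B$ equal $\bigsqcup_{i\in S_A\cup S_B}$, $\bigsqcup_{i\in S_A\cap S_B}$ and $\bigsqcup_{i\in S_A\setminus S_B}$ of the $C_{(k,i)}$, respectively. All three are finite unions of sets $C_{(k,i)}$ and hence lie in $\mathcal B$. This gives closure under unions and intersections, and also normality. The empty set is the empty union, so it lies in $\mathcal B$ as well.

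Next we handle ranges. Since $\mathcal A=\{a\}$, every labelled path has the form $a^m$. Because every vertex $x$ emits exactly the edge $(x,x+1)$, we get $r(A,a^m) = A+m$. In particular $r(a^m) = \mathbb Z = C_{(0,0)}$, using $n_0 \geq 1$ (if $n_0>1$, write $\mathbb Z$ as $\bigsqcup_i C_{(0,i)}$ instead). Either way $r(a^m)\in\mathcal B$, and also $r(\omega)=\mathbb Z$ if that is required. For $A\in\mathcal B$, Lemma~\ref{rangecalc} shows that $r(A,a)\in\mathcal B$. Inducting on $m$, using $r(A,a^{m})=r(r(A,a^{m-1}),a)$, gives $r(A,a^m)\in\mathcal B$ for all $m$.

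Finally we check weak left-resolvingness. The map $x\mapsto x+m$ is a bijection of $\mathbb Z$, so $r(A\cap B,a^m) = (A\cap B)+m = (A+m)\cap(B+m) = r(A,a^m)\cap r(B,a^m)$. Alternatively, one can write $A$ and $B$ at a common level $k$ and apply Lemma~\ref{rangecalc}: the index shift $i\mapsto i+1 \bmod n_k$ is injective, so it commutes with intersections of index sets.

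No step is a real obstacle. The only point that needs care is bringing several sets to a common level $k$. This needs the refinement property $C_{(k,i)}=\bigsqcup_{j} C_{(k+1,i+jn_k)}$, where $j$ ranges over $0\le j<n_{k+1}/n_k$, and this is exactly where $n_k\mid n_{k+1}$ is used. That property is already encapsulated in Lemma~\ref{repdyadic}, which we are allowed to assume.
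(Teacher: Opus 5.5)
Your proof is correct and follows essentially the same route as the paper: you bring finitely many sets to a common level $k$ via Lemma~\ref{repdyadic}, reduce unions, intersections and relative complements to operations on index sets, and handle ranges and weak left-resolvingness by the index shift of Lemma~\ref{rangecalc} (equivalently, translation $A\mapsto A+m$). You are slightly more complete than the paper, since you also check $r(a^m)\in\mathcal B$ and treat all words $a^m$ rather than only the letter $a$; one small slip is that $C_{(0,0)}=n_0\mathbb Z$ equals $\mathbb Z$ only when $n_0=1$, not for every $n_0\geq 1$, although your parenthetical already covers the general case.
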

	\begin{proof}
    Using Lemma~\ref{repdyadic}, for two sets $A, B \in \mathcal B$, we choose $k' = \max(k(A), k(B))$. Because all sets $C_{(k', i)}$ for $i \in [0, n_{k'})$ are mutually disjoint, the union, intersection, and relative complement can be clearly constructed from the representation in Lemma~\ref{repdyadic}, so the space is normal. 
    
    To prove that $\mathcal B$ is closed under relative ranges, again using the representation $B = \bigsqcup_{i \in S} C_{(k, i)}$, it is clear that $r(B, a) = \bigsqcup_{i \in S} C_{(k, i+1)} \in \mathcal B$ using Lemma~\ref{rangecalc}, reducing modulo $n_k$ if $i+1 = n_k$. Hence, $r(B, a) \in \mathcal B$ and because $\mathcal A = \{a\}$, $\mathcal B$ is therefore closed under relative ranges.
    
    To see that the space is weakly left-resolving, again, taking $k' = \max(k(A), k(B))$ so $A = \bigsqcup_{i \in S_A} C_{(k', i)}$ and $B = \bigsqcup_{i \in S_B} C_{(k', i)}$, we can calculate that $A \cap B = \bigsqcup_{i \in S_A \cap S_B} C_{(k', i)}$. Using the same calculation as for the relative ranges, we find that $r(A \cap B, a) = r(A, a) \cap r(B, a)$, so the space is weakly left-resolving.
	\end{proof}

    Hence, $(\mathcal E, \mathcal L, \mathcal B)$ is a weakly left-resolving normal labelled space, so we can define \[B_R(\{n_k\}) \coloneqq L_R(\mathcal E, \mathcal L, \mathcal B).\]

    \section{Connection to the classical Bunce-Deddens Algebra}
    We first review the construction of the classical Bunce-Deddens algebra, which can be found in \cite{davidson1996c}. Throughout this section, let $\{n_k\}_{k=0}^{\infty}$ be a strictly increasing sequence of positive integers where $n_k \mid n_{k+1}$ as before. 

    \begin{definition}\label{odometerdefinition}
    Consider the topological space \[X = \{(x_0, x_1, x_2, \ldots) \in \prod_{k=0}^\infty \{0, \ldots, n_k-1\} \colon x_k \equiv x_{k+1} \mod n_k\} \] equipped with the subspace topology from the discrete product topology. Define an automorphism $\varphi: X \rightarrow X$ that maps \[(x_0, x_1, \ldots) \mapsto (x_0+1, x_1+1, \ldots)\] with the operation being taken modulo $n_k$ at every level $k$. The Cantor space $X$ is compact Hausdorff and the map $\varphi$ induces a topological action $\mathbb Z \curvearrowright X$ in the standard way.
    \end{definition}

    \begin{definition}
    Let $C(X) = \{f: X \rightarrow \mathbb C \colon f \textit{ is continuous}\}$. The Bunce-Deddens $C^{\ast}$-algebra $B(\{n_k\})$ is the crossed product $C(X) \rtimes_{\tilde{\varphi}} \mathbb Z$ where $\tilde{\varphi}$ is the induced action on $C(X)$ by taking $f \mapsto f \circ \varphi^{-1}$.
    \end{definition}

    We now review some theory required to derive this crossed product from our labelled space $(\mathcal E, \mathcal L, \mathcal B)$. Our main references for these results will be \cite{boava2017inverse, de2020labelled}.

  	\begin{definition}
    Let $P$ be a meet semilattice with zero. For a subset $F \subseteq P$, we say that
    \begin{enumerate}
    \item $F$ is a \textit{filter} if it is neither empty nor $P$ and, for all $x, y \in F$, we have that $x \land y \in F$ and, for all $y \in P$ such that $x \leq y$ for some $x \in F$, we have that $y \in F$.
    \item $F$ is an \textit{ultrafilter} if it is a filter that is maximal under inclusion.
    \end{enumerate}
    On the set of filters $F(P)$, we define a topology generated by sets of the form $U_{(x \colon x_1, \ldots, x_n)} = \{F \in F(P) \colon x \in F, x_1 \notin F, \ldots, x_n \notin F\}$ for any $x, x_1, \ldots, x_n \in P$ and $n \geq 0$. These sets $U_{(x \colon x_1, \ldots, x_n)}$ form a basis of compact open sets and the topology on $F(P)$ is Hausdorff.

    We define the \textit{tight filters} $T(P)$ as the closure of the ultrafilters in $F(P)$ with the subspace topology inherited from the topology on $F(P)$. We define $V_{(x \colon x_1, \ldots, x_n)}  = U_{(x \colon x_1, \ldots, x_n)} \cap T(P)$. The sets \[\{V_{(x \colon x_1, \ldots, x_n)}\colon n \geq 0 \text{ with } x, x_1, \ldots, x_n \in P\}\] form a basis of compact open sets of $T(P)$.
	\end{definition}

    The next lemma will allow us to determine whether a filter is an ultrafilter (or a tight filter). 

    \begin{lemma}\cite[Theorem~3.8]{zhang2026partial} \label{computableconditionlemma}
    For any $x \in P$, a \textit{finite cover} of $x$ is a set $\{x_1, \ldots, x_n\}$ such that $x_i \leq x$ and for all $0 \neq y \leq x$, there exists some $i$ such that $x_i \land y \neq 0$.

    A filter $\xi$ is an ultrafilter if and only if for all $x \in P \setminus \xi$, there exists some $y \in \xi$ such that $y \land x = 0$.

    A filter $\xi$ is a tight filter if and only if for all $x \in \xi$ and finite covers $\{x_1, \ldots, x_n\}$ of $x$, there exists some $i$ such that $x_i \in \xi$.
    \end{lemma}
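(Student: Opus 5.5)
We prove the two characterizations separately; only the meet-semilattice structure and the definitions of filter, ultrafilter and tight filter are used.

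\textbf{Ultrafilter criterion.} Suppose first that $\xi$ is a filter such that every $x \in P \setminus \xi$ has some $y \in \xi$ with $y \land x = 0$. Let $\eta \supsetneq \xi$ be a filter and pick $x \in \eta \setminus \xi$. Taking $y \in \xi$ with $y \land x = 0$, we get $0 = x \land y \in \eta$. A filter containing $0$ is all of $P$, since it is upward closed; this is excluded by the definition of a filter. So $\xi$ is maximal.

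Conversely, let $\xi$ be an ultrafilter and $x \notin \xi$, and suppose $x \land y \neq 0$ for all $y \in \xi$. Let $\eta$ be the upward closure of $\{x \land y : y \in \xi\}$. This set is closed under meets, because $(x\land y)\land(x \land y') = x \land (y \land y')$. By hypothesis $0 \notin \eta$, so $\eta \neq P$. Hence $\eta$ is a filter. It contains $\xi$, since $x \land y \le y$, and it contains $x$. Thus $\eta \supsetneq \xi$, contradicting maximality.

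\textbf{Tight filter criterion.} We use the basic open sets $U_{(x:x_1,\dots,x_n)}$ throughout.

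For necessity, note that the set of filters with the cover property is closed in $F(P)$. Indeed, if $x \in \xi$ and $\{x_i\}$ is a cover of $x$ with no $x_i \in \xi$, then $U_{(x:x_1,\dots,x_n)}$ is an open neighbourhood of $\xi$ consisting of filters that also fail the property. So it suffices to show every ultrafilter $\xi$ has the cover property. Suppose $x \in \xi$ and no $x_i$ lies in $\xi$. The ultrafilter criterion gives $y_i \in \xi$ with $y_i \land x_i = 0$. Set $y = x \land y_1 \land \dots \land y_n \in \xi$. Then $y \neq 0$, $y \le x$, and $y \land x_i = 0$ for every $i$, contradicting that $\{x_i\}$ covers $x$.

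For sufficiency, let $\xi$ have the cover property and take a basic neighbourhood $U_{(x:x_1,\dots,x_n)}$ of $\xi$; we must find an ultrafilter in it. Replace each $x_i$ by $x_i \land x$, which is legitimate because $x_i \land x \in \xi$ if and only if $x_i \in \xi$, given $x \in \xi$. Then $\{x_1,\dots,x_n\}$ is not a cover of $x$. Otherwise the cover property would put some $x_i \in \xi$, which is false. Hence there is $0 \neq y \le x$ with $y \land x_i = 0$ for all $i$.

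By Zorn's lemma, extend the principal filter $\uparrow y$ to an ultrafilter $\zeta$. Chains of filters have union a filter, because $0$ never enters. Then $x \in \zeta$. Moreover, if $x_i \in \zeta$, then $0 = y \land x_i \in \zeta$, which is impossible; so no $x_i$ lies in $\zeta$. Therefore $\zeta \in U_{(x:x_1,\dots,x_n)}$.

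The degenerate case $n = 0$ is immediate: the ultrafilter extending $\uparrow x$ lies in $U_{(x)}$. Hence $\xi$ lies in the closure of the ultrafilters, so it is tight.

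The main obstacle is this sufficiency direction. Two points need care there: turning the failure of the cover condition into a witness $y$, and checking that the basic neighbourhoods $U$ form a local basis at $\xi$. The latter holds because finite intersections of sets $U$ containing $\xi$ contain some single $U$ containing $\xi$: take the meet of the positive elements, which lies in $\xi$, and collect all the negative elements.
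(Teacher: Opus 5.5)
Your proof is correct and complete. The paper gives no proof of this lemma of its own; it is quoted from \cite[Theorem~3.8]{zhang2026partial}, so there is no in-paper route to compare against. What you give is the standard self-contained argument, in the spirit of Exel's treatment of tight filters:
\begin{itemize}
\item the ultrafilter criterion, via the filter generated by $\{x \land y : y \in \xi\}$;
\item necessity of the cover condition, from its closedness in $F(P)$ together with the fact that ultrafilters satisfy it;
\item sufficiency, by turning a non-cover into a witness $0 \neq y \le x$ orthogonal to every $x_i$ and extending $\uparrow y$ to an ultrafilter with Zorn's lemma.
\end{itemize}

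You also handle correctly the two points that are easy to miss. First, finite intersections of the sets $U_{(x:x_1,\dots,x_n)}$ are again of this form, so testing these sets suffices for membership in the closure. Second, one must first arrange $x_i \le x$ so that failure to be a cover actually produces such a $y$.

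The only thing I would tighten is the justification of the replacement $x_i \mapsto x_i \land x$. You state it for $\xi$, but what you actually need at the end is that the ultrafilter $\zeta$ avoids the \emph{original} $x_i$. This holds because $U_{(x:x_1,\dots,x_n)} = U_{(x:x_1\land x,\dots,x_n\land x)}$ as sets of filters, since the same argument applies to any filter containing $x$. More directly, $y \land x_i = y \land (x_i \land x) = 0$ because $y \le x$.
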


    \begin{theorem}
    For every labelled space $(\mathcal E, \mathcal L, \mathcal B)$ there is an associated inverse semigroup \[S = \{(\alpha, A, \beta) \colon \alpha, \beta \in \mathcal L^{\ast} \text { and } A \in \mathcal B_{\alpha} \cap \mathcal B_{\beta} \text { and } A \neq \emptyset\} \cup \{0\}\] with idempotents 
    \[E(S) = \{(\alpha, A, \alpha) \colon \alpha \in \mathcal L^{\ast} \text { and } A \in \mathcal B_{\alpha} \text { and } A \neq \emptyset\} \cup \{0\}.\]

    $E = E(S)$ is a meet-semilattice and its tight filters $T(E)$ can be characterized by \cite[Theorem~6.7]{boava2017inverse}. Let $\mathbb F$ be the free group generated by the alphabet $\mathcal A$ associated to the labelled space. By \cite[Proposition~3.12]{de2020labelled}, there is a partial action\[\Phi = (\{V_t\}_{t \in \mathbb F}, \{\phi_t\}_{t \in \mathbb F})\] defined by \[V_{\alpha} = V_{(\alpha, r(\alpha), \alpha)}\]\[V_{\alpha^{-1}} = V_{(\omega, r(\alpha), \omega)}\]\[V_{\omega} = T(E)\] for $\alpha \in \mathcal L^{\geq 1}$ and the maps $\phi_t: \mathbb V_{t^{-1}} \rightarrow V_t$ are defined by gluing actions on tight filters (see \cite[Section~2.5]{boava2017inverse}).

    \end{theorem}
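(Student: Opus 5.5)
This theorem mostly collects results from \cite{boava2017inverse, de2020labelled}. The proof therefore consists of checking that our setting satisfies the hypotheses of those results and recalling the constructions. I would proceed in three steps.

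\textbf{Step 1: $S$ is an inverse semigroup.} I would define the product on $S$ by the standard rule from \cite{boava2017inverse}:
\begin{itemize}
\item if $\gamma = \beta\gamma'$, then $(\alpha, A, \beta)(\gamma, B, \delta) = (\alpha\gamma', r(A, \gamma') \cap B, \delta)$;
\item if $\beta = \gamma\beta'$, then $(\alpha, A, \beta)(\gamma, B, \delta) = (\alpha, A \cap r(B, \beta'), \delta\beta')$;
\item in all other cases, and whenever the resulting set is empty, the product is $0$.
\end{itemize}
Inversion is $(\alpha, A, \beta)^{\ast} = (\beta, A, \alpha)$. I would verify the following.
\begin{itemize}
\item Well-definedness: $r(A,\gamma') \cap B$ lies in $\mathcal B_{\alpha\gamma'} \cap \mathcal B_\delta$. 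This holds because $r(A,\gamma') \subseteq r(\alpha\gamma')$ and $B \in \mathcal B_\delta$.
\item Associativity: this is a case analysis on the prefix relations among the words. In each case the sets agree because the space is weakly left-resolving, which gives $r(A \cap B, \gamma) = r(A,\gamma) \cap r(B,\gamma)$, and because $r(r(A,\alpha),\beta) = r(A,\alpha\beta)$.
\item Idempotents: a direct computation shows that $(\alpha,A,\beta)^2 \neq 0$ forces $\alpha$ and $\beta$ to be comparable. Equality $(\alpha,A,\beta)^2 = (\alpha,A,\beta)$ with matching lengths then forces $\alpha = \beta$. Conversely, elements $(\alpha,A,\alpha)$ are idempotent because $r(A,\omega) = A$.
\item Commuting idempotents: the product of $(\alpha,A,\alpha)$ and $(\beta,B,\beta)$ is symmetric in the two factors. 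Together with $s s^{\ast} s = s$, this gives that $S$ is an inverse semigroup.
\end{itemize}

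\textbf{Step 2: $E(S)$ is a meet-semilattice.} In any inverse semigroup, the idempotents form a meet-semilattice with meet equal to the product. Here the meet is explicit:
\[
(\alpha,A,\alpha) \land (\alpha\gamma,B,\alpha\gamma) = (\alpha\gamma,\, r(A,\gamma) \cap B,\, \alpha\gamma),
\]
and the meet is $0$ when the words are incomparable. The characterization of $T(E)$ is then quoted from \cite[Theorem~6.7]{boava2017inverse}. Its hypotheses (weakly left-resolving and normal) are exactly what the preceding theorem established.

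\textbf{Step 3: the partial action.} For the partial action I would invoke \cite[Proposition~3.12]{de2020labelled} directly. Its sets are $V_\alpha = V_{(\alpha,r(\alpha),\alpha)}$, $V_{\alpha^{-1}} = V_{(\omega,r(\alpha),\omega)}$ and $V_\omega = T(E)$, and the maps $\phi_\alpha$ are given by gluing, i.e.\ prefixing $\alpha$ to the word part of a tight filter. The remaining checks are:
\begin{itemize}
\item each $\phi_t$ is a homeomorphism $V_{t^{-1}} \to V_t$;
\item $\phi_{st}$ extends $\phi_s \circ \phi_t$.
\end{itemize}
Both are part of the cited proposition. I would then set $V_t = \emptyset$ for reduced words $t$ not of the form $\alpha\beta^{-1}$.

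\textbf{Main obstacle.} I expect the associativity check in Step 1 to be the main obstacle, because it splits into many prefix cases. I would handle it by passing to the faithful representation of $S$ by partial bijections on pairs (word, vertex set), where associativity is automatic.
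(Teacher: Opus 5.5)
Your proposal is correct and matches the paper, which gives no separate argument for this statement: it simply imports the inverse semigroup $S$, the description of $T(E)$, and the partial action from \cite[Theorem~6.7]{boava2017inverse} and \cite[Proposition~3.12]{de2020labelled}, and your Step 1 only unpacks what \cite{boava2017inverse} already proves (your partial-bijection representation does work, with multiplicativity again reducing to the weakly left-resolving identity). One small correction: the statement concerns an arbitrary labelled space, so the weakly left-resolving and normal hypotheses come from the paper's standing assumption on all labelled spaces, not from the preceding theorem, which only treats the Bunce--Deddens space.
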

    We now describe the semigroup induced by the labelled space associated to $\{n_k\}_{k=0}^{\infty}$. Fix such a sequence and let $(\mathcal E, \mathcal L, \mathcal B)$ be the induced labelled space, $S$ be the induced semigroup, and $E(S)$ be the meet semilattice of idempotents. Let $X$ be the Cantor space induced by the sequence $\{n_k\}_{k=0}^{\infty}$.

    Note that $\mathcal B_{\alpha} = \mathcal B$ for all $\alpha \in \mathcal L^{\ast}$. Hence, \[E(S) = \{(\alpha, A, \alpha) \colon \alpha \in \mathcal L^{\ast} \text{ and } A \in \mathcal B \setminus \{\emptyset\}\} \cup \{0\}.\] Because $\mathcal A$ has only one symbol, we take $m = |\alpha|$ and write $(m, A, m)$ for elements in $E(S)$. With this notation and by \cite[Proposition~4.1]{de2020labelled}, we have that $(m_1, A_1, m_1) \leq (m_2, A_2, m_2)$ if and only if $m_1 \geq m_2$ and $A_1 \subseteq r(A_2, a^{m_1-m_2})$.

    \begin{theorem}
    There is a topological isomorphism $f: T(E) \rightarrow X$. The map $f$ takes
    \[\xi \mapsto (\xi_0, \xi_1, \xi_2, \ldots)\]  where $\xi_k = \{i \in [0, n_k) \colon (0, C_{(k, i)}, 0) \in \xi\}$. We will prove that if $\xi$ is a tight filter, then $|\xi_k| = 1$ and use $\xi_k$ to denote this unique element.
    \end{theorem}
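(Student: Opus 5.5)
We first show that $f$ is well defined, i.e.\ that $|\xi_k|=1$ for every tight filter $\xi$ and every $k$, and that the resulting sequence lies in $X$.

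\textbf{Step 1: $|\xi_k|\le 1$.} The elements $(0,C_{(k,i)},0)$ for distinct $i$ have meet $0$, because $C_{(k,i)}\cap C_{(k,j)}=\emptyset$. A filter cannot contain $0$, so $|\xi_k|\le 1$.

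\textbf{Step 2: $|\xi_k|\ge 1$.} Pick any $(m,A,m)\in\xi$. By upward closure, $(0,r(A,a^{-m}),0)\in\xi$, since $(m,A,m)\le(0,A',0)$ whenever $A\subseteq r(A',a^m)$. Hence $\xi$ contains some $(0,B,0)$ with $B\neq\emptyset$; enlarging, $(0,\mathbb Z,0)\in\xi$. The family $\{(0,C_{(k,i)},0)\}_{i<n_k}$ is a finite cover of $(0,\mathbb Z,0)$. Indeed, any nonzero $(m,A,m)\le(0,\mathbb Z,0)$ has $A\neq\emptyset$, so $r(A,a^{-m})$ meets some $C_{(k,i)}$. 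We then check via the order formula of \cite[Proposition~4.1]{de2020labelled} that the meet $(0,C_{(k,i)},0)\wedge(m,A,m)=(m,A\cap r(C_{(k,i)},a^m),m)$ is nonzero. Tightness, through Lemma~\ref{computableconditionlemma}, then forces some $(0,C_{(k,i)},0)\in\xi$.

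\textbf{Step 3: compatibility.} Since $C_{(k+1,j)}\subseteq C_{(k,j\bmod n_k)}$, upward closure together with Step 1 gives $\xi_{k+1}\equiv\xi_k \pmod{n_k}$. So $f(\xi)\in X$.

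\textbf{Injectivity.} The key claim is that a tight filter is determined by its level-$0$ part $\xi^0=\{A:(0,A,0)\in\xi\}$, and that $\xi^0$ is determined by the $\xi_k$. For the second part, every $B\in\mathcal B$ is, by Lemma~\ref{repdyadic}, a finite disjoint union of $C_{(k,i)}$ with $k\ge k(B)$. So $(0,B,0)\in\xi$ iff $\xi_k\in S$: one direction is upward closure, and the other is Step 1 together with meets. For the first part, we show $(m,A,m)\in\xi$ iff $(0,r(A,a^{-m}),0)\in\xi$. The forward direction was noted in Step 2. For the converse, $(m,A,m)$ and the elements $(m,B,m)$ with $B\cap A=\emptyset$ together form a finite cover of $(m,\mathbb Z,m)$. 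Also $(m,\mathbb Z,m)\in\xi$, since $\{(m,\mathbb Z,m)\}$ is itself a finite cover of $(0,\mathbb Z,0)$: every nonzero element below $(0,\mathbb Z,0)$ meets it. Tightness then gives some $(m,B',m)\in\xi$, and comparing level-$0$ translates shows it must be $(m,A,m)$ or intersect it. This is the step I expect to be the main obstacle, as the meet and order bookkeeping across levels $m$ must be done carefully with the translation $r(\cdot,a^{-m})$.

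\textbf{Surjectivity.} Given $x\in X$, define
\[
\xi=\{(m,A,m): r(A,a^{-m})\supseteq C_{(k,x_k)} \text{ for some } k\}.
\]
This set is nonempty and upward closed. It is closed under meets because the $C_{(k,x_k)}$ are nested. It does not contain $0$. It is an ultrafilter by Lemma~\ref{computableconditionlemma}: if $(m,A,m)\notin\xi$, take $k$ large enough that $r(A,a^{-m})$ is a union of level-$k$ classes not containing $C_{(k,x_k)}$; then $(m,r(C_{(k,x_k)},a^m),m)\in\xi$ has meet $0$ with $(m,A,m)$. Ultrafilters are tight, and clearly $f(\xi)=x$.

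\textbf{Homeomorphism.} The preimage under $f$ of the cylinder $\{x:x_k=i\}$ is $V_{((0,C_{(k,i)},0)\colon)}$, which is open. So $f$ is a continuous bijection. Since $T(E)$ is compact (a closed subset of a compact space of filters on a semilattice with a largest element) and $X$ is Hausdorff, $f$ is a homeomorphism. Alternatively, one can check directly that basic sets $V$ map to finite unions of cylinders, using the description obtained in the injectivity step.
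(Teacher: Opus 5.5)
Your proof is correct and follows the paper's route. Well-definedness uses disjointness together with the finite cover $\{(0,C_{(k,i)},0)\}_i$ of $(0,\mathbb Z,0)$. Surjectivity uses the same ultrafilter: your set $\{(m,A,m): r(A,a^{-m})\supseteq C_{(k,x_k)}\}$ is exactly the paper's upward closure of $\{(m,r(C_{(k,x_k)},a^m),m)\}$. The homeomorphism comes from continuity on cylinders plus compact-to-Hausdorff. Your injectivity argument via $(m,\mathbb Z,m)\in\xi$ usefully makes explicit what the paper dismisses as ``clearly an inverse''.

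One small fix is needed there: the set of all $(m,B,m)$ with $B\cap A=\emptyset$ is not finite, so it is not a finite cover. Use the two-element cover $\{(m,A,m),(m,\mathbb Z\setminus A,m)\}$, which exists by normality. More simply, note that $(m,A,m)$ is the meet of $(0,r(A,a^{-m}),0)$ and $(m,\mathbb Z,m)$, both of which are already in $\xi$.
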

    \begin{proof}
    If $|\xi_k| > 1$, taking distinct $i, i' \in \xi_k$, we have that $(0, C_{(k, i)}, 0)(0, C_{(k, i')}, 0) = (0, \emptyset, 0) = 0 \in \xi$. Hence, because a filter cannot contain $0$, we must have that $|\xi_k| \leq 1$.

    We now show that $|\xi_k| \neq 0$. We know that $(0, \mathbb Z, 0) \in \xi$ because it is the highest element. Using Lemma~\ref{computableconditionlemma} and noting that $\{(0, C_{(k, i)}, 0)\}_{i=0}^{n_k-1}$ is a finite cover of $(0, \mathbb Z, 0)$, we find that at least one of $\{(0, C_{(k, i)}, 0)\}_{i=0}^{n_k-1}$ must be in $\xi$ so $|\xi_k| \geq 1$.

    To see that $(\xi_k)_{k=0}^{\infty}$ satisfies $\xi_{k+1} \equiv \xi_k \mod n_k$, note that if this is not true then $C_{(k+1, \xi_{k+1})} \cap C_{(k, \xi_k)} = \emptyset$ which implies that $(0, C_{(k+1, \xi_{k+1})} , 0)(0,  C_{(k, \xi_k)}, 0) = 0 \in \xi$ as before, which is a contradiction.

    To prove that $f$ is bijective, we construct an inverse. For a given sequence $(x_0, x_1, \ldots)$ such that $x_{k+1} \equiv x_k \mod n_k$, consider the set $\{(m, r(C_{(k, x_k)}, a^m), m)\}_{k, m \geq 0}$. Any finite product of these elements is non-zero, so the set of all elements in $E(S)$ larger than some element in the set, denoted by $\{(m, r(C_{(k, x_k)}, a^m), m)\}_{k, m \geq 0}^{+}$, is a well-defined filter. This process is clearly an inverse of $f$, so if $\xi = \{(m, r(C_{(k, x_k)}, a^m), m)\}_{k, m \geq 0}^{+}$ is a tight filter then we have proved a bijection.

    To show that $\xi$ is a tight filter, we will show that it is an ultrafilter. To do this, we will use Lemma~\ref{computableconditionlemma}. Let $0 \neq (m, A, m) \in E(S) \setminus \xi$ and write $A = \bigsqcup_{i \in S} C_{(k, i)}$ for sufficiently large $k$ and some set $S \subseteq \{0, 1, \ldots, n_k-1\}$ by Lemma~\ref{repdyadic}. 
    
    Consider the element $(n, r(A, a^{n-m}), n)$ for some $n$ where $n \geq m$ and $n_k \mid n$. Because $n$ is divisible by $n_k$, we have that $r(C_{(k, x_k)}, a^n) = C_{(k, x_k)}$ so $(n, C_{(k, x_k)}, n) \in \xi$. Note that by the representation of $A$, we either have that $C_{(k, x_k)} \subseteq r(A, a^{n-m})$ or $r(A, a^{n-m}) \cap C_{(k, x_k)} = \emptyset$. If $C_{(k, x_k)} \subseteq r(A, a^{n-m})$, then we would have that $(n, r(A, a^{n-m}), n) \geq (n, C_{(k, x_k)}, n)$ which would imply that $(n, r(A, a^{n-m}), n) \in \xi$. 
    
    Thus, we find that \[(0, C_{(k, x_k)}, 0)(n, r(A, a^{n-m}), n) = (n, C_{(k, x_k)} \cap r(A, a^{n-m}), n) = 0.\] Now note that $(n, \mathbb Z, n) \in \xi$ and \[(n, \mathbb Z, n)(m, A, m) = (n, r(A, a^{n-m}), n)\] so we find that \[0 = (0, C_{(k, x_k)}, 0)(n, r(A, a^{n-m}), n) = (0, C_{(k, x_k)}, 0)(n, \mathbb Z, n)(m, A, m)\] with $ (0, C_{(k, x_k)}, 0)(n, \mathbb Z, n) \in \xi$, so our condition is met and $\xi$ is an ultrafilter.

    Both $X$ and $T(E)$ are Hausdorff. $X$ is compact by Tychonoff's theorem and $T(E)$ is compact because $T(E) = V_{(0, \mathbb Z, 0)}$ is a compact open set. Thus, to prove that $f$ is a topological isomorphism it suffices to prove that it is continuous. For cylinders on indices $k_1, \ldots, k_m$ with elements $x_{k_j} \in [0, n_{k_j})$, the inverse image can be calculated to be exactly $\bigcap_{j=1}^m V_{(0, C_{(k_j, x_{k_j})}, 0)}$ which is clearly an open set, proving continuity.
    \end{proof}
    
    \begin{corollary}\label{filterrepresentation}
    Every tight filter $\xi$ is uniquely determined by the set $A(\xi) = \{A \colon (0, A, 0) \in \xi\} \subseteq \mathcal B$. Namely, \[\xi = \bigsqcup_{m=0}^{\infty} \{(m, r(A, a^m), m) \colon A \in A(\xi)\}\] and if $(0, A, 0) \in \xi$ then $(m, r(A, a^m), m) \in \xi$.
    \end{corollary}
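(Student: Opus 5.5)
We deduce the corollary from the proof of the preceding theorem, where $f$ was shown to be a bijection whose inverse sends $(x_0,x_1,\ldots)$ to the upward closure $\{(m, r(C_{(k,x_k)},a^m), m)\}_{k,m\ge 0}^{+}$.

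The key steps are as follows.

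\textbf{Uniqueness.} Let $\xi$ be a tight filter. The sequence $f(\xi)=(\xi_k)$ is read off from the membership of the elements $(0,C_{(k,i)},0)$ in $\xi$. Each such $C_{(k,i)}$ lies in $A(\xi)$ exactly when $(0,C_{(k,i)},0)\in\xi$. So $A(\xi)$ determines $f(\xi)$, and since $f$ is injective, $A(\xi)$ determines $\xi$.

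\textbf{The explicit formula, inclusion $\supseteq$.} Take $A\in A(\xi)$, so $(0,A,0)\in\xi$. Since $(0,\mathbb Z,0)$ is the largest element and $\xi$ is upward closed, we first note $(0,\mathbb Z,0)\in\xi$. The elements $(m,\mathbb Z,m)$ and $(0,\mathbb Z,0)$ are comparable in the order recalled from \cite[Proposition~4.1]{de2020labelled}. To see $(m,\mathbb Z,m)\in\xi$, we use the description $\xi=f^{-1}(f(\xi))$: the generating set contains $(m, r(C_{(k,\xi_k)},a^m),m)$, and this lies below $(m,\mathbb Z,m)$. The product rule gives $(m,\mathbb Z,m)(0,A,0)=(m,r(A,a^m),m)$, the same computation as $(n,\mathbb Z,n)(m,A,m)=(n,r(A,a^{n-m}),n)$ in the theorem. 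Closure of $\xi$ under meets then gives $(m,r(A,a^m),m)\in\xi$. This proves the final sentence of the statement and the inclusion $\supseteq$.

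\textbf{Inclusion $\subseteq$.} Let $(m,B,m)\in\xi$. By the lemma defining $r(\cdot,a^{-m})$, there is a unique $A\in\mathcal B$ with $r(A,a^m)=B$, namely $A=r(B,a^{-m})$. It remains to show $(0,A,0)\in\xi$. By Lemma~\ref{repdyadic}, write $A=\bigsqcup_{i\in S}C_{(k,i)}$ for large $k$. Since $\xi$ is an ultrafilter, and $(0,C_{(k,\xi_k)},0)\in\xi$, exactly one of two cases holds: $C_{(k,\xi_k)}\subseteq A$, or $C_{(k,\xi_k)}\cap A=\emptyset$. In the first case $(0,A,0)\ge(0,C_{(k,\xi_k)},0)$, so $(0,A,0)\in\xi$ as required. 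In the second case, apply the $\supseteq$ step to $C_{(k,\xi_k)}$: this gives $(m,r(C_{(k,\xi_k)},a^m),m)\in\xi$. Now $r(\cdot,a^m)$ is a bijection on $\mathcal B$ preserving intersections (weak left-resolving together with the uniqueness lemma), so $r(C_{(k,\xi_k)},a^m)\cap B=\emptyset$. The product of these two elements of $\xi$ is then $0$, a contradiction. Hence the second case cannot occur.

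\textbf{Disjointness.} The union is disjoint because the terms with different $m$ differ in their first coordinate, and within a fixed $m$ the map $A\mapsto r(A,a^m)$ is injective.

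The main obstacle is the inclusion $\subseteq$: showing every element of $\xi$ at level $m$ comes from level $0$. This requires the dichotomy on $C_{(k,\xi_k)}$ and the fact that $r(\cdot,a^m)$ preserves disjointness. The rest is bookkeeping with the semigroup product.
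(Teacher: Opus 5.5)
Your proof is correct and takes essentially the paper's route: the paper only says the corollary is clear from $f$ and its explicit inverse, and you fill in the details by using that inverse to get $(m,\mathbb Z,m)\in\xi$, then closure under meets for $\supseteq$ and a zero-meet contradiction for $\subseteq$. One small correction: the dichotomy that either $C_{(k,\xi_k)}\subseteq A$ or $C_{(k,\xi_k)}\cap A=\emptyset$ holds because $A$ is a disjoint union of level-$k$ classes, not because $\xi$ is an ultrafilter.
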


    \begin{proof}
    Clear from the definition of $f$ and its inverse.
    \end{proof}

    Note that the free group $\mathbb F$ with a single generator is isomorphic to $\mathbb Z$. Furthermore, for all $t \in \mathbb Z$, we have that $V_t = T(E)$ because $(\alpha, r(\alpha), \alpha) = (\alpha, \mathbb Z, \alpha)$ is in all filters, so we have an action $\mathbb Z \curvearrowright T(E)$ instead of a partial action. The following theorem gives an isomorphism between the action on $X$ and the action on $T(E)$.

    \begin{theorem} \label{isomorphicpartialaction}
    Under the isomorphism $f$, we have an isomorphism of actions by $\mathbb Z$ \[\mathbb Z \curvearrowright X \cong \mathbb Z \curvearrowright T(E).\]
    \end{theorem}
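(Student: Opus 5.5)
Since the acting group is $\mathbb Z$, generated by $a$, and both actions are honest actions (all $V_t = T(E)$ as noted above), it suffices to check that $f \circ \phi_a = \varphi \circ f$ on $T(E)$. The actions of all other $t \in \mathbb Z$ then agree, because $\phi_{a^{-1}} = \phi_a^{-1}$ and $\phi_{a^m} = \phi_a^m$ for a global action coming from a partial action with full domains. For the same reason $\varphi^{-1}$ corresponds to $\phi_{a^{-1}}$. I would also settle the orientation convention first: since $r(C_{(k,i)}, a) = C_{(k,i+1)}$ by Lemma~\ref{rangecalc}, I expect that the generator acting by ``shifting along $a$'' corresponds to $\varphi$. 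If the paper's convention for $\phi_a$ instead corresponds to $\varphi^{-1}$, the isomorphism of actions still holds after composing with the automorphism $t \mapsto -t$ of $\mathbb Z$, or by replacing $f$ with nothing else.

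The key step is to compute $\phi_a(\xi)$ explicitly using Corollary~\ref{filterrepresentation}. Write $\xi$ as determined by $A(\xi)$. By \cite[Section~2.5]{boava2017inverse}, as recalled in \cite{de2020labelled}, the map $\phi_a \colon V_{a^{-1}} = V_{(\omega, \mathbb Z, \omega)} \to V_a$ sends a tight filter $\xi$ to the filter generated by the elements $(a, r(A,a), a)$, i.e.\ $(1, r(A,a), 1)$, for $(0,A,0)\in\xi$. Equivalently, $\phi_a(\xi)$ is the upward closure of $\{(1\cdot\alpha\ldots)\}$, which in our one-letter notation is
\[\phi_a(\xi) = \{(m+1, r(A, a^{m+1}), m+1) : A \in A(\xi),\ m\ge 0\}^{+}.\]
To apply $f$, I need $A(\phi_a(\xi))$, i.e.\ which $(0, B, 0)$ lie above some element of $\phi_a(\xi)$. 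Using the order $(m_1,A_1,m_1)\le (0,B,0)$ if and only if $A_1\subseteq r(B,a^{m_1})$, we get that $(0,B,0) \in \phi_a(\xi)$ if and only if $r(A,a^{m+1}) \subseteq r(B, a^{m+1})$ for some $A\in A(\xi)$ and some $m \ge 0$. By the uniqueness lemma defining $r(\cdot, a^{-m})$, this is equivalent to $r(A,a)\subseteq B$. Hence $A(\phi_a(\xi)) = \{r(A,a) : A\in A(\xi)\}^{+}$.

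Taking $A = C_{(k,\xi_k)}$ and applying Lemma~\ref{rangecalc} gives $C_{(k,\xi_k+1)} \in A(\phi_a(\xi))$, so $f(\phi_a(\xi))_k = \xi_k + 1 \bmod n_k$ for every $k$. This is exactly $\varphi(f(\xi))$. Uniqueness of $\xi_k$ for tight filters is already proved, so no further checking is needed.

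The main obstacle is pinning down the precise definition of $\phi_a$ from the gluing construction and verifying the description of $\phi_a(\xi)$ above. In particular, I need that the image really is generated by the $(1, r(A,a),1)$, and that the "$m$" shift together with the inequality $A_1\subseteq r(B,a^{m_1})$ reduces cleanly via the injectivity of $A\mapsto r(A,a^{m})$ on $\mathcal B$. I would handle this by checking that $r(\cdot, a)$ is a bijection of $\mathcal B$ preserving inclusion, which follows from Lemma~\ref{repdyadic} and Lemma~\ref{rangecalc}.
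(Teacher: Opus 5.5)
Your overall plan (check the intertwining on a generator, then read off the coordinates of $f$ through $A(\cdot)$) is the same as the paper's. However, the central computation is wrong in two places, and the answer $\xi_k+1$ comes out only because the two errors combine.

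First, your formula for $\phi_a$ is wrong. The map $\phi_a\colon V_{a^{-1}}\to V_a$ is the action of $s=(a,r(a),\omega)$; note $s^{\ast}s=(\omega,r(a),\omega)$ and $ss^{\ast}=(a,r(a),a)$. The product in $S$ gives $s(m,A,m)s^{\ast}=(m+1,A,m+1)$: the set is unchanged and only the level moves. The elements $(1,r(A,a),1)$ with $A\in A(\xi)$ that you use already lie in $\xi$ by Corollary~\ref{filterrepresentation}. They represent the same projection as $(0,A,0)$, since $s_ap_{r(A,a)}s_a^{\ast}=p_As_as_a^{\ast}=p_A$. Consequently your set $\{(m+1,r(A,a^{m+1}),m+1)\colon A\in A(\xi),\ m\geq 0\}^{+}$ is just $\xi$, so what you wrote down is the identity map.

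Second, the step ``$r(A,a^{m+1})\subseteq r(B,a^{m+1})$ iff $r(A,a)\subseteq B$'' is false. Since $r(\cdot,a^{m+1})$ is an inclusion-preserving bijection of $\mathcal B$, the condition is equivalent to $A\subseteq B$. Done correctly, your own computation gives $A(\phi_a(\xi))=A(\xi)$, i.e.\ $f\circ\phi_a=f$, not $\varphi\circ f$.

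The missing idea is the one the paper uses. The map on $E(S)$ shifts only the level, and the shift of coordinates comes from the fact that, by Corollary~\ref{filterrepresentation}, level $m$ of $\xi$ carries the sets $r(A,a^m)$. The paper's $\phi$ is the stripping map induced by $g\colon(m,A,m)\mapsto(m-1,A,m-1)$, i.e.\ $\phi_{a^{-1}}$. Since $(1,r(C_{(k,\xi_k)},a),1)=(1,C_{(k,\xi_k+1)},1)\in\xi$, one gets $(0,C_{(k,\xi_k+1)},0)\in g(\xi)$, hence $f(g(\xi))=\varphi(f(\xi))$.

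With the correct prepending formula $\phi_a(\xi)=\{(m+1,r(A,a^m),m+1)\colon A\in A(\xi),\ m\geq 0\}^{+}$, your method gives $(0,B,0)\in\phi_a(\xi)$ iff $r(B,a)\in A(\xi)$, so $f(\phi_a(\xi))_k=\xi_k-1$. Thus your expected orientation is backwards: $\phi_a$ corresponds to $\varphi^{-1}$, and $\phi_{a^{-1}}$ corresponds to $\varphi$. Your remark about $t\mapsto -t$ absorbs this, but only after the formula for the map itself is corrected.
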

    \begin{proof}
    Applying \cite[Definition~5.2]{zhang2026partial}, we can calculate that the map $\phi: T(E) \rightarrow T(E)$ is defined by a partial map $g: E(S) \dashrightarrow E(S)$ that takes $(m, A, m) \mapsto (m-1, A, m-1)$ for $m > 0$ so that $\phi(\xi) = g(\xi)$.

    It suffices to show that the diagram of the generating automorphisms commutes:
    \[\begin{tikzcd}
    T(E) \arrow{r}{f} \arrow[swap]{d}{\phi} & X \arrow{d}{\varphi} \\
    T(E) \arrow[swap]{r}{f}& X
    \end{tikzcd}\]
    Let $\xi \in T(E)$ be a tight filter. We have that $\varphi(f(\xi)) = (\xi_k+1)_{k=0}^{\infty}$ where every coordinate is taken modulo $n_k$. In the other direction, we have that $f(\phi(\xi)) = (g(\xi)_k)_{k=0}^{\infty}$. Note that $(0, C_{(k, \xi_k)}, 0) \in \xi$, so we have that $(1, r(C_{(k, \xi_k)}, a), 1) = (1, C_{(k, \xi_k+1)}, 1) \in \xi$ by Corollary~\ref{filterrepresentation}. Thus, $g((1, C_{(k, \xi_k+1)}, 1)) = (0, C_{(k, \xi_k+1)}, 0)\in g(\xi)$ and $g(\xi)_k = \xi_k+1$ where $\xi_k+1$ is also taken modulo $n_k$. Hence, \[f(\phi(\xi)) = (\xi_k+1)_{k=0}^{\infty} = \varphi(f(\xi))\] so we are done.
    \end{proof}

    \begin{corollary}\cite[cf. Theorem~3.2]{exel1994bunce} Using the isomorphism of the group actions, we obtain the following $R$-algebra isomorphisms.
    \[B_R(\{n_k\}) = L_R(\mathcal E, \mathcal L, \mathcal B) \cong \mathrm{Lc}(X, R) \rtimes_{\tilde{\varphi}} \mathbb Z\cong A_R(X \rtimes_{\varphi} \mathbb Z) \]
        where $A_R(X \rtimes_{\varphi} \mathbb Z)$ is the Steinberg algebra on the groupoid $X \rtimes_{\varphi} \mathbb Z$ and $\mathrm{Lc}(X, R)$ is the $R$-algebra of locally constant functions $X \rightarrow R$ with compact support.
    \end{corollary}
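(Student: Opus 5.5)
The plan is to chain three known isomorphisms and to use Theorem~\ref{isomorphicpartialaction} to match the dynamical data.

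\emph{First isomorphism.} I would invoke the general result of \cite{de2020labelled} (and its algebraic analog for Leavitt labelled path algebras, cf.\ \cite{Boava2021LeavittPA}). It says that for a weakly left-resolving normal labelled space, $L_R(\mathcal E,\mathcal L,\mathcal B)$ is isomorphic to the algebraic partial crossed product $\mathrm{Lc}(T(E),R)\rtimes_{\Phi}\mathbb F$. Here $\Phi$ is the partial action on tight filters described above. The isomorphism is given on generators by
\[
p_A\mapsto 1_{V_{(\omega,A,\omega)}}\delta_\omega,\qquad s_a\mapsto 1_{V_a}\delta_a,\qquad s_a^{\ast}\mapsto 1_{V_{a^{-1}}}\delta_{a^{-1}}.
\]
In our case $\mathbb F\cong\mathbb Z$, and every $V_t=T(E)$ because $(\alpha,\mathbb Z,\alpha)$ lies in every tight filter. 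So the partial crossed product is a genuine crossed product $\mathrm{Lc}(T(E),R)\rtimes\mathbb Z$. Its multiplication reduces to $(f\delta_n)(g\delta_m)=f\,(g\circ\phi^{-n})\,\delta_{n+m}$.

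\emph{Transport to $X$.} The homeomorphism $f\colon T(E)\to X$ induces an $R$-algebra isomorphism $\mathrm{Lc}(X,R)\to\mathrm{Lc}(T(E),R)$, $h\mapsto h\circ f$. By Theorem~\ref{isomorphicpartialaction} we have $f\circ\phi=\varphi\circ f$, so this isomorphism intertwines $\tilde\varphi$ with the induced action of $\phi$. Extending it by $\delta_n\mapsto\delta_n$ therefore gives an isomorphism of skew group rings
\[
\mathrm{Lc}(X,R)\rtimes_{\tilde\varphi}\mathbb Z\cong\mathrm{Lc}(T(E),R)\rtimes\mathbb Z.
\]
This step only requires checking that the multiplication rule is preserved, which is immediate from the intertwining. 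Since $X$ is compact, compact support is automatic.

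\emph{Steinberg algebra.} For the last isomorphism I would cite the standard fact that, for an action of a discrete group $G$ on a locally compact Hausdorff totally disconnected space $X$, the Steinberg algebra of the transformation groupoid $X\rtimes G$ is isomorphic to $\mathrm{Lc}(X,R)\rtimes G$. Concretely, $1_{U\times\{n\}}\mapsto 1_U\delta_n$ for compact open $U\subseteq X$. One checks that this respects convolution, that it is bijective because the sets $U\times\{n\}$ span $A_R(X\rtimes\mathbb Z)$, and that the bisections $X\times\{n\}$ are disjoint.

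I expect the main obstacle to be the first step: making sure the cited Leavitt-labelled-path-algebra-to-partial-crossed-product isomorphism applies to this labelled space with coefficients in an arbitrary commutative ring $R$, and that its hypotheses hold. The hypotheses are weak left-resolvingness, normality, and the fact that every $A\in\mathcal B$ is regular here, since each $A$ has $\Delta_A=\{a\}$ and there are no sinks. If no ready citation exists over general $R$, I would instead build the map on generators as above. I would verify the Cuntz--Krieger relations; relation (5) becomes $1_{V_{(\omega,A,\omega)}}=1_{V_a}\delta_a\,1_{V_{(\omega,r(A,a),\omega)}}\,1_{V_{a^{-1}}}\delta_{a^{-1}}$, which follows from Corollary~\ref{filterrepresentation}. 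I would then obtain injectivity from a graded uniqueness argument using the $\mathbb Z$-grading on both sides, and surjectivity from the fact that the $1_{V}$ for basic compact open $V$ lie in the image.
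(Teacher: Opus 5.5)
Your proposal is correct and takes essentially the paper's route: the paper's proof simply combines Theorem~\ref{isomorphicpartialaction} with \cite[Theorem~5.6 and Theorem~6.1]{Boava2021LeavittPA}, i.e.\ the identification of $L_R(\mathcal E,\mathcal L,\mathcal B)$ with the partial skew group ring over the tight spectrum, transported to $X$ exactly as in your first two steps. The only cosmetic difference is the Steinberg-algebra step: you get it from the general fact $A_R(X\rtimes G)\cong\mathrm{Lc}(X,R)\rtimes G$ for transformation groupoids after moving to $X$, whereas the paper takes it from Theorem~6.1 of that reference on $T(E)$ and then transports it via the conjugacy.
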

    \begin{proof}
    The result follows from Theorem~\ref{isomorphicpartialaction} together with \cite[Theorem~5.6 and Theorem~6.1]{Boava2021LeavittPA}.
    \end{proof}

    \section{Properties of $B_R(\{n_k\})$}\label{propertiessection}

    We first review some basic definitions for rings with local units. 

	\begin{definition}
	Let $A$ be a (not necessarily unital) ring. We say that $A$ is a ring \textit{with local units} if there is a set of idempotents $E \subseteq A$ such that every finite subset $F \subseteq A$ is contained in $eAe$ for some $e \in E$.
	\end{definition}

    All Leavitt path algebras and Leavitt labelled path algebras are rings with local units, so the definitions apply to all algebras considered in this paper.

    \begin{definition}\cite{Abrams01011983, AnhMoritaEquivalenceLocal}
	For a ring $A$ with local units, the category of unitary left modules consists of those left $A$-modules $M$ that satisfy $AM = M$ where the usual identity condition $1_A \cdot m = m$ is imposed only when $A$ has an identity. For two rings $A, B$ with local units, we say that the rings $A$ and $B$ are Morita equivalent, denoted $A \equiv_M B$, when the categories of unitary left $A$-modules and unitary left $B$-modules are equivalent.
	\end{definition}

    The following definitions will be used later.

	\begin{enumerate}
	\item $A$ is \textit{simple} if its only two-sided ideals are $0$ and $A$.
	\item An element $a \in A$ is \textit{regular} if there exists $b \in A$ such that $aba = a$. $A$ is called \textit{von Neumann regular} if all $a \in A$ are regular.
	\item An idempotent $e \in A$ is called \textit{finite} if every right invertible element of $eAe$ is left invertible. $A$ is called \textit{directly finite} (also commonly called Dedekind finite and von Neumann finite) if every idempotent $e \in A$ is finite. $A$ is called \textit{stably finite} if for every $n \geq 1$, the matrix ring $M_n(A)$ is directly finite.
	\item Let $K$ be a field and let $A$ be a $K$-algebra. $A$ is called \textit{locally matricial} if it is the direct limit of finite direct products of full matrix algebras over $K$.
	\end{enumerate}

    We now prove some properties about $R$-Bunce-Deddens algebras. Throughout, let $\{n_k\}_{k=0}^{\infty}$ be our required family and let $R$ be an arbitrary commutative unital ring. Let $(\mathcal E, \mathcal L, \mathcal B)$ be our associated labelled space.

    \begin{theorem}
    $B_R(\{n_k\})$ is unital.
    \end{theorem}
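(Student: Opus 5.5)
The natural candidate for the identity is $p_{\mathbb Z}$, since $\mathbb Z = \mathcal E^0 = C_{(0,0)}$ when $n_0 = 1$. In general $\mathbb Z = \bigsqcup_{i=0}^{n_0-1} C_{(0,i)}$ is a finite union of sets in $\mathcal B$, so $\mathbb Z \in \mathcal B$ and $p_{\mathbb Z}$ is a generator. Because $B_R(\{n_k\})$ is generated as an $R$-algebra by $\{p_A\}_{A \in \mathcal B} \cup \{s_a, s_a^{\ast}\}$, it suffices to check that $p_{\mathbb Z}$ acts as a two-sided identity on each generator. Multiplicativity then extends this to all finite products and to $R$-linear combinations.

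For the projections, relation (1) gives $p_{\mathbb Z} p_A = p_{\mathbb Z \cap A} = p_A$ and likewise $p_A p_{\mathbb Z} = p_A$ for every $A \in \mathcal B$.

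For $s_a$, we use relation (2) together with the computation $r(\mathbb Z, a) = \mathbb Z = r(a)$. This holds since every vertex $x+1$ is the range of the edge $(x,x+1)$; alternatively, it follows from Lemma~\ref{rangecalc} applied to $\mathbb Z = \bigsqcup_i C_{(0,i)}$. Relation (2) then gives $p_{\mathbb Z} s_a = s_a p_{r(\mathbb Z,a)} = s_a p_{\mathbb Z}$. To identify this with $s_a$, we combine relations (3) and (4): $s_a p_{\mathbb Z} = s_a p_{r(a)} = s_a s_a^{\ast} s_a = s_a$. Symmetrically, $s_a^{\ast} p_{\mathbb Z} = p_{\mathbb Z} s_a^{\ast}$ by (2), and $p_{r(a)} s_a^{\ast} = s_a^{\ast} s_a s_a^{\ast} = s_a^{\ast}$ by (3) and (4), which handles $s_a^{\ast}$.

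The only step needing care is the one for $s_a$ and $s_a^{\ast}$: it relies on $r(\mathbb Z, a)$ being all of $\mathbb Z$, so that $p_{r(\mathbb Z,a)}$ coincides with $p_{r(a)}$, which relation (3) identifies with $s_a^{\ast}s_a$. This is immediate from the bi-infinite structure of the graph, which has no sources.

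As a consistency check, $\mathbb Z$ is regular: $\Delta_{\mathbb Z} = \{a\}$ and there are no sinks. So relation (5) gives $p_{\mathbb Z} = s_a p_{\mathbb Z} s_a^{\ast} = s_a s_a^{\ast}$, and hence $s_a$ is in fact invertible with inverse $s_a^{\ast}$. This is not needed for unitality, but it is useful later.
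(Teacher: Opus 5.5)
Your proposal is correct and takes the same approach as the paper. The paper's unit $\sum_{i=0}^{n_0-1} p_{C_{(0,i)}}$ equals your $p_{\mathbb Z}$ by relation (1), and you supply the generator-by-generator check that the paper calls "clear", using relations (2)--(4) together with $r(\mathbb Z,a)=r(a)=\mathbb Z$.
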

    \begin{proof}
    The element $\sum_{i=0}^{n_0-1} p_{C_{(0, i)}}$ is clearly a unit because $\mathbb Z = \bigsqcup_{i=0}^{n_0-1} C_{(0, i)}$.
    \end{proof}
    
	\begin{theorem}\cite[cf. Theorem~2]{bunce1975family}\label{simplicitytheorem}
	$B_R(\{n_k\})$ is simple if and only if $R$ is a field.
	\end{theorem}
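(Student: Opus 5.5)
The plan is to prove the two directions separately, using the Steinberg algebra description $B_R(\{n_k\}) \cong A_R(X \rtimes_{\varphi} \mathbb Z)$ from the preceding corollary.

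\emph{Necessity.} Suppose $R$ is not a field, and pick a proper nonzero ideal $I \subsetneq R$. I will exhibit a proper nonzero two-sided ideal of $B_R(\{n_k\})$. The natural candidate is $I \cdot B_R(\{n_k\})$, the kernel of the surjection $B_R(\{n_k\}) \to B_{R/I}(\{n_k\})$, which comes from functoriality of the universal presentation. Equivalently, in the Steinberg model it is $A_I(X\rtimes_\varphi \mathbb Z)$, the compactly supported locally constant functions with values in $I$. Two checks are needed:
\begin{enumerate}
\item The ideal is nonzero. Take $r \in I\setminus\{0\}$. Since the Steinberg algebra is a free $R$-module with basis given by characteristic functions of a disjoint family of compact open bisections, $r\,p_{\mathbb Z} = r\cdot 1_{X}$ is the nonzero function with constant value $r$.
\item The ideal is proper. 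The unit $1 = 1_X$ takes value $1 \notin I$, so $1$ does not lie in it.
\end{enumerate}

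\emph{Sufficiency.} Let $R = K$ be a field. I will invoke the standard simplicity criterion for Steinberg algebras (Brown--Clark--Farthing--Sims, Clark--Edie-Michell): for an ample Hausdorff groupoid $\mathcal G$, the algebra $A_K(\mathcal G)$ is simple if and only if $\mathcal G$ is effective and minimal. Here $\mathcal G = X \rtimes_\varphi \mathbb Z$ is Hausdorff and ample because $X$ is a Cantor space and $\mathbb Z$ is discrete, so only two properties remain.
\begin{enumerate}
\item \textbf{Minimality.} Every $\mathbb Z$-orbit is dense. Given $x \in X$ and a basic cylinder fixing the coordinate $x_k = j$ (this determines all lower coordinates), $\varphi^{\,j - x_k}(x)$ lies in it.
\item \textbf{Effectiveness.} For a free action this amounts to the action having no periodic points. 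If $\varphi^t(x) = x$ with $t \neq 0$, then $n_k \mid t$ for all $k$. This is impossible because $n_k$ is strictly increasing and therefore unbounded. Hence the interior of the isotropy is just $X$.
\end{enumerate}
These together give simplicity.

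An alternative, if one prefers to stay inside the labelled-path-algebra framework, is the simplicity theorem for Leavitt labelled path algebras. In labelled-space language the two hypotheses become the condition that every cycle has an exit (aperiodicity of the tight spectrum) and the absence of nontrivial hereditary saturated subsets of $\mathcal B$. Both again reduce to the aperiodicity and minimality of the odometer via Theorem~\ref{isomorphicpartialaction}.

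The main obstacle is making sure the quoted simplicity criterion applies exactly in this setting (Hausdorff, ample, unital) and checking effectiveness carefully; the minimality and necessity arguments are routine. If citing the criterion is unavailable, I would instead argue directly. Take a nonzero ideal $J$ and reduce a nonzero element to the form $\sum_t f_t \delta_t$. Compress it by a small idempotent $1_U$ on which the $\varphi^t$, $t\neq0$, move $U$ off itself. Such a $U$ exists by aperiodicity and is chosen as a cylinder at a level $k$ with $n_k$ exceeding all relevant $|t|$. This leaves an element $c\,1_{U'}$ with $c \in K^\times$. Minimality plus compactness then shows that finitely many translates of $U'$ cover $X$, so $1 \in J$.
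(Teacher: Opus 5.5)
Your proof is correct, but it takes a different route from the paper: you work on the groupoid side, while the paper works on the labelled-space side.

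\textbf{The paper's route.} It stays inside the Leavitt labelled path algebra framework and applies \cite[Theorem~9.4]{Boava2021LeavittPA}. It checks two things directly.
\begin{enumerate}
\item The only hereditary saturated subsets of $\mathcal B$ are $\{\emptyset\}$ and $\mathcal B$. If $C_{(k,i)}\in H$, then its $n_k$ relative ranges $r(C_{(k,i)},a^j)$ lie in $H$, and they partition $\mathbb Z$.
\item There are no cycles $(a^m,A)$, so condition $(L_{\mathcal B})$ holds vacuously. Shrinking to $C_{(k',i)}\subseteq A$ with $n_{k'}>m+i$ gives $r(C_{(k',i)},a^m)\neq C_{(k',i)}$.
\end{enumerate}
The requirement that $R$ be a field is absorbed into the cited theorem rather than argued separately.

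\textbf{Your route.} You pass through the Steinberg model $A_R(X\rtimes_\varphi\mathbb Z)$ and the Brown--Clark--Farthing--Sims / Clark--Edie-Michell criterion. You then verify minimality and freeness of the odometer. These are the dynamical counterparts of the paper's two checks: trivial hereditary saturated sets correspond to minimality, and absence of cycles corresponds to effectiveness.

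\textbf{What each buys.} Your approach gives an explicit, elementary proof of necessity via the ideal $I\cdot B_R(\{n_k\})$ of $I$-valued functions, which the paper never spells out. Your fallback compression argument is also a fully self-contained proof of sufficiency: take a level-$k$ cylinder with $n_k$ exceeding all $|t|$, and use its $n_k$ translates, which partition $X$. The cost is that everything rests on the crossed-product identification of Section~3, which itself depends on external results. The paper's argument uses only the combinatorics of $\mathcal B$.

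\textbf{One small inaccuracy.} $A_R(\mathcal G)$ does not have a basis consisting of characteristic functions of a \emph{disjoint} family of bisections. For example, $1_X$ and the cylinder indicators cannot all be spanned by such a family. You do not need this claim, though: $r\cdot 1_X\neq 0$ simply because elements of $A_R(\mathcal G)$ are $R$-valued functions.
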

	\begin{proof}
	We will use the classification of simplicity from \cite[Theorem~9.4]{Boava2021LeavittPA}.

	We first prove that $\{\emptyset\}$ and $\mathcal B$ are the only hereditary saturated subsets. Let $\{\emptyset\} \neq H \subseteq \mathcal B$ be a hereditary saturated subset. It suffices to prove that the vertex set $\mathbb Z \in H$.
	
	Because $H \neq \{\emptyset\}$, there must exist some $C_{(k, i)} \in H$ as all sets of $\mathcal B$ are finite unions of such $C_{(k, i)}$ so we can apply the hereditary condition. Note that, taking $i+j$ modulo $n_k$, we have that $C_{(k, (i+j))} = r(C_{(k, i)}, a^j) \in H$ also by the hereditary property. It is easy to see that $\bigsqcup_{j=0}^{n_k-1} C_{(k, i+j)} = \mathbb Z \in H$, so we are done.

	We now prove that $(\mathcal E, \mathcal L, \mathcal B)$ has no cycles, so the labelled space obeys condition $(L_{\mathcal B})$ trivially. Assume for the sake of contradiction that there exists a path $a^m$ where $m \geq 1$ and a set $A \in \mathcal B$ such that all $B \in \mathcal B$ with $B \subseteq A$ satisfy $r(B, a^m) = B$. Let $C_{(k, i)}$ be any set such that $C_{(k, i)} \subseteq A$. Let $k' \geq 1$ be any integer such that $n_{k'} > m+i$ (using the fact that the sequence is strictly increasing) and $k' \geq k$. We calculate that $r(C_{(k', i)}, a^m) = C_{(k', i+m)}$. Because $n_{k'} > m+i$, we have that $i+m \not\equiv i \mod n_{k'}$ so $C_{(k', i+m)} \neq C_{(k', i)}$ so $r(C_{(k', i)}, a^m) \neq C_{(k', i)}$.  Because $k' \geq k$, we have that $C_{(k', i)} \subseteq C_{(k, i)} \subseteq A$ and thus by the cycle condition we have that $r(C_{(k', i)}, a^m) = C_{(k', i)}$ which is a contradiction.
    \end{proof}

	\begin{theorem}\cite[cf. Proof of Theorem~2]{bunce1975family} \label{matrixalgebrarep}
	Every finite subset $S \subseteq B_R(\{n_k\})$ is contained in a sub-algebra that is isomorphic to $M_{n_k}(R[z, z^{-1}])$ for some $k$. In particular, $B_R(\{n_k\})$ is the direct limit of the following sequence
    
    \[\begin{tikzcd}
    \ldots \arrow{r}{\beta_{k-1}} & M_{n_k}(R[z, z^{-1}]) \arrow{r}{\beta_k} & M_{n_{k+1}}(R[z, z^{-1}]) \arrow{r}{\beta_{k+1}} & \ldots \\
    \end{tikzcd}\] where the maps $\beta_k$ will be defined later. Hence, we can write \[B_R(\{n_k\}) \cong \varinjlim M_{n_k}(R[z, z^{-1}])\] as $R$-algebras.

	\end{theorem}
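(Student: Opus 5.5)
For each $k$ I will write down an explicit copy of $M_{n_k}(R[z,z^{-1}])$ inside $B_R(\{n_k\})$, check that these copies are nested and exhaust the algebra, and then read off the connecting maps $\beta_k$ as inclusions.

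\textbf{Preliminary identities.} For every $A\in\mathcal B$ we have $|\Delta_A|=1$ and there are no sinks, so every nonempty $A$ is regular. Relation (5) then gives $p_A=s_ap_{r(A,a)}s_a^{\ast}$. Taking $A=\mathbb Z$ and using $r(a)=\mathbb Z$ yields $s_as_a^{\ast}=1=s_a^{\ast}s_a$. Hence $s:=s_a$ is invertible with $s^{-1}=s_a^{\ast}$. Relation (2) becomes $s^{-1}p_As=p_{r(A,a)}$, so conjugation by $s$ shifts the sets $C_{(k,i)}$ cyclically.

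\textbf{Matrix units.} Fix $k$ and write $e_i=p_{C_{(k,i)}}$ for $0\le i<n_k$. These are mutually orthogonal idempotents summing to $1$. By Lemma~\ref{rangecalc}, $s^{-1}e_is=e_{i+1}$ with indices taken mod $n_k$, so $e_is=se_{i+1}$. I define
\[
E_{ij}=s^{i}e_j s^{-j}=e_is^{i-j}\quad (0\le i,j<n_k)
\]
and check the matrix-unit relations $E_{ij}E_{lm}=\delta_{jl}E_{im}$ and $\sum_i E_{ii}=1$.

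\textbf{The Laurent generator.} Set $u=s^{n_k}$. Since $s^{-n_k}e_is^{n_k}=e_{i+n_k}=e_i$, the element $u$ commutes with every $E_{ij}$. I then define $\psi_k:M_{n_k}(R[z,z^{-1}])\to B_R$ by $\epsilon_{ij}z^m\mapsto E_{ij}u^m$, which is an algebra homomorphism. The key observation is $s=\sum_i E_{i,i+1}$ with the wraparound term $E_{n_k-1,0}$ multiplied by $u$, so $s$ and $s^{-1}$ lie in the image of $\psi_k$. Moreover, every $p_A$ with $k(A)\le k$ is a sum of $E_{ii}$'s.

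\textbf{Exhaustion.} The elements $p_A$ and $s^{\pm1}$ generate $B_R$, and Lemma~\ref{repdyadic} lets me take $k$ large enough to cover any finite set. So every finite subset lies in $\mathrm{im}\,\psi_k$ for some $k$.

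\textbf{Nesting.} The images are nested, $\mathrm{im}\,\psi_k\subseteq\mathrm{im}\,\psi_{k+1}$. Indeed $e_i^{(k)}$ is a sum of $e^{(k+1)}$'s, and $s^{\pm1}$ already lies in $\mathrm{im}\,\psi_{k+1}$. I define $\beta_k=\psi_{k+1}^{-1}\circ\psi_k$, which in practice is the standard embedding sending $z$ to the cyclic-shift matrix with a $z$ in the corner, of size $d=n_{k+1}/n_k$, placed blockwise.

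\textbf{Main obstacle: injectivity of $\psi_k$.} Once injectivity holds, the direct limit statement follows formally from the nested union. I would prove it by showing $\psi_k(x)=0$ forces $x=0$. If $\sum_m c_m E_{ij}u^m=0$, multiplying on the left by $E_{0i}$ and on the right by $E_{j0}$ gives $e_0\,q(u)=0$ with $q=\sum c_mz^m$.

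To see that $e_0q(u)\neq0$ when $q\ne0$, I would use the Steinberg algebra realization $\mathrm{Lc}(X,R)\rtimes\mathbb Z$ from the corollary above. There the elements $e_0u^m$ are $1_{U_0}\delta_{mn_k}$ with $U_0\neq\emptyset$, and these are $R$-linearly independent across different $m$ because the crossed product is graded by $\mathbb Z$. Alternatively, the $\mathbb Z$-grading of the Leavitt labelled path algebra together with nonvanishing of $p_{C_{(k,0)}}$ gives the same conclusion.
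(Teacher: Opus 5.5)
Your route differs from the paper's, and it works once one indexing slip is fixed.

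\textbf{The slip in the matrix units.} From $e_is=se_{i+1}$ you get $se_js^{-1}=e_{j-1}$, hence $s^ie_js^{-j}=e_{j-i}s^{i-j}$. In particular $s^ie_is^{-i}=e_0$ for every $i$, so your $E_{ij}$ are not matrix units. Your second expression $e_is^{i-j}$ is not equal to the first, and it is not multiplicative in general; it is the right formula only for the opposite convention $se_is^{-1}=e_{i+1}$. Use instead
\[
E_{ij}=e_is^{j-i}=s^{j-i}e_j=s^{-i}e_0s^{j}.
\]
Then $E_{ii}=e_i$, and $E_{ij}E_{lm}=\delta_{jl}E_{im}$ follows from $s^te_l=e_{l-t}s^t$. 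Also $E_{i,i+1}=e_is$ and $E_{n_k-1,0}u=e_{n_k-1}s$, so your key identity $s=\sum_{i=0}^{n_k-2}E_{i,i+1}+E_{n_k-1,0}u$ holds as stated; it is exactly the paper's formula for $\varphi_k(s_a)$.

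\textbf{A precision in the injectivity step.} Since $R$ is an arbitrary commutative ring, you need $c\,p_{C_{(k,0)}}\neq 0$ for every $0\neq c\in R$, not merely $p_{C_{(k,0)}}\neq 0$. This is the same nonvanishing fact the paper relies on via \cite[Lemma~4.12(ii)]{Boava2021LeavittPA}. Your Steinberg-model variant gives it directly, since $c1_{U_0}\neq 0$.

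\textbf{Comparison with the paper.} The paper runs the map the other way.
\begin{enumerate}
\item It introduces the auxiliary labelled spaces $(\mathcal E,\mathcal L,\mathcal B_k)$, with $\mathcal B_k$ the finite Boolean algebra generated by the $C_{(k,i)}$.
\item It defines $\varphi_k\colon L_R(\mathcal E,\mathcal L,\mathcal B_k)\to M_{n_k}(R[z,z^{-1}])$ by universality, after checking the Cuntz--Krieger relations on explicit matrices.
\item It gets surjectivity from generators and injectivity from the graded uniqueness theorem.
\item It uses graded uniqueness again to see that the canonical maps $L_R(\mathcal E,\mathcal L,\mathcal B_k)\to L_R(\mathcal E,\mathcal L,\mathcal B)$ are injective.
\end{enumerate}
You instead map $M_{n_k}(R[z,z^{-1}])$ into $B_R(\{n_k\})$ using only the universal property of matrices over $R[z,z^{-1}]$: a full set of matrix units summing to $1$ plus a commuting unit. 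So no relations need checking in the matrix ring, and no auxiliary algebras or uniqueness theorems appear. Injectivity reduces, by compressing with $E_{0i}$ and $E_{j0}$, to $e_0q(u)\neq 0$, which the $\mathbb Z$-grading settles.

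Your maps are precisely the inverses of the paper's $\varphi_k$ composed with the inclusions, so the two arguments produce the same subalgebras. Your version builds them directly inside $B_R(\{n_k\})$ and gives an explicit $\beta_k$; your blockwise description is correct up to conjugation by a permutation matrix. The paper's version has the conceptual bonus of exhibiting each $M_{n_k}(R[z,z^{-1}])$ as itself a Leavitt labelled path algebra, namely that of the eventually constant sequence, with connecting maps induced by $\mathcal B_k\subseteq\mathcal B_{k+1}$.
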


	\begin{proof}
	Let $k \geq 0$ be fixed and consider the labelled space $(\mathcal E, \mathcal L, \mathcal B_k)$ where $\mathcal B_k \subseteq \mathcal B$ is the Boolean sub-algebra consisting of finite unions of the sets $C_{(k, i)}$ for $i \in [0, n_k)$. It is not hard to show that $(\mathcal E, \mathcal L, \mathcal B_k)$ is another labelled space. In fact, noting that our construction of $(\mathcal E, \mathcal L, \mathcal B)$ does not actually require $\{n_k\}_{k=0}^{\infty}$ to be strictly increasing, $(\mathcal E, \mathcal L, \mathcal B_k)$ is the labelled space that corresponds to the eventually constant sequence $\{n'_j\}_{j=0}^{\infty}$ where $n'_j = n_j$ if $0 \leq j < k$ and $n'_j =  n_k$ if $j \geq k$.

	We now prove that \[L_R(\mathcal E, \mathcal L, \mathcal B_k) \cong M_{n_k}(R[z, z^{-1}]).\] Let $E_{(i, j)}$ for $i, j \in [0, n_k)$ be the standard matrix units whose $(i, j)$-entry is $1$ and whose other entries are $0$. Define a map \[\varphi_k: L_R(\mathcal E, \mathcal L, \mathcal B_k) \rightarrow  M_{n_k}(R[z, z^{-1}])\] by taking \[p_{C_{(k, i)}} \mapsto E_{(i, i)}\] \[s_a \mapsto E_{(0, 1)} + E_{(1, 2)} + \ldots + E_{(n_k-2, n_k-1)} + z E_{(n_k-1, 0)}\] \[s_a^{\ast} \mapsto E_{(1, 0)} + E_{(2, 1)} + \ldots + E_{(n_k-1, n_k-2)} + z^{-1} E_{(0, n_k-1)}.\]
	
	We need to prove that this mapping preserves conditions (1)-(5) from \cite[Definition~3.1]{Boava2021LeavittPA}. Throughout, we will use the fact that $E_{(i, j)} E_{(i', j')} = E_{(i, j')} \cdot 1[j = i']$.
    
    \begin{enumerate}
    \item The diagonal matrix units clearly satisfy the required relations.
    \item It suffices to prove this for all $A = C_{(k, i)}$. Calculate    
    \[p_{C_{(k, i)}} s_a \mapsto \begin{cases} E_{(i, i+1)} & 0 \leq i < n_k-1 \\ z E_{(n_k-1, 0)} & i = n_k-1 \\ \end{cases}\]
    \[s_ap_{C_{(k, i)}} \mapsto \begin{cases} E_{(i-1, i)} & 1 \leq i < n_k \\ z E_{(n_k-1, 0)} & i = 0 \\ \end{cases}\]
    Using \[r(C_{(k, i)}, a) = \begin{cases} C_{(k, i+1)} & 0\leq i < n_k-1 \\ C_{(k, 0)} & i = n_k-1 \end{cases} \] it is obvious that (2) is preserved.
    \item Because $\mathcal A$ has only the symbol $a$, it suffices to prove that $s_a^{\ast}s_a$ maps to the unit matrix as $p_{r(a)}$ is the unit. It is not hard then to see that \[s_a^{\ast}s_a \mapsto \left(E_{(1, 0)} + \ldots + E_{(n_k-1, n_k-2)} + z^{-1} E_{(0, n_k-1)} \right)\left( E_{(0, 1)}  + \ldots + E_{(n_k-2, n_k-1)} + z E_{(n_k-1, 0)} \right) \]\[= \sum_{i=0}^{n_k-1} E_{ii} = I_{n_k}.\]
    \item Use the same calculation as (3).
    \item It suffices to prove that $p_A = s_a p_{r(A, a)} s_a^{\ast}$ is preserved. We can derive (5) easily from (2) by considering the inverse $r(A, a^{-1})$.
    \end{enumerate}

	By the universality of $L_R(\mathcal E, \mathcal L, \mathcal B_k)$, the map $\varphi_k: L_R(\mathcal E, \mathcal L, \mathcal B_k) \rightarrow M_{n_k}(R[z, z^{-1}])$ is well-defined. 
    
    We can see that $\varphi_k$ is surjective because \[\varphi_k(s_a^{n_k}) = z I_{n_k}\]\[\varphi_k((s_a^{\ast})^{n_k}) = z^{-1} I_{n_k}\] \[\varphi_k(p_{C(k, i)}(s_a^{\ast})^j) = E_{(i, i-j)} \text{ for all } 0 \leq j \leq i\]\[\varphi_k(p_{C(k, i)}s_a^j) = E_{(i, i+j)} \text { for all } 0 \leq j < n_k - i\]  which generate all elements of $M_{n_k}(R[z, z^{-1}])$.
    
    To show that $\varphi_k$ is injective, we use the graded uniqueness theorem \cite[Corollary~5.5]{Boava2021LeavittPA}. In particular, we grade $M_{n_k}(R[z, z^{-1}])$ with respect to $\mathbb Z$ by taking $E_{ij}z^r \mapsto n_kr + (j-i)$. We can see that the degree of $\varphi_k(p_{C_{(k, i)}})$ is $0$ and the degrees of $\varphi_k(s_a)$ and $\varphi_k(s_a^{\ast})$ are $1$ and $-1$ respectively. Thus, $\varphi_k$ is a $\mathbb Z$-graded homomorphism and is injective by the graded uniqueness theorem as $\varphi_k(rp_{C_{(k, i)}}) = rE_{(i, i)} \neq 0$ for any $r \in R \setminus \{0\}$ and $i \in [0, n_k)$ and every $B \in \mathcal B_k$ can be written as a finite disjoint union of $C_{(k, i)}$ by definition.
    
    Combining the two facts above, we find that $\varphi_k$ is a well-defined isomorphism of $R$-algebras \[L_R(\mathcal E, \mathcal L, \mathcal B_k) \cong M_{n_k}(R[z, z^{-1}]).\]

	We will now prove the direct limit. For all $k \geq 0$, there are natural maps \[i_k: L_R(\mathcal E, \mathcal L, \mathcal B_k) \hookrightarrow L_R(\mathcal E, \mathcal L, \mathcal B)\] \[\pi_k:L_R(\mathcal E, \mathcal L, \mathcal B_k) \hookrightarrow L_R(\mathcal E, \mathcal L, \mathcal B_{k+1})\] induced by the obvious maps \[p_B \mapsto p_B\] \[s_a \mapsto s_a\] \[s_a^{\ast} \mapsto s_a^{\ast}.\] These maps are injective by the graded uniqueness theorem and \cite[Lemma~4.12 (ii)]{Boava2021LeavittPA}. Furthermore, the following diagram commutes.

    \[\begin{tikzcd}
	L_R(\mathcal E, \mathcal L, \mathcal B_k) \arrow{rr}{\pi_k} \arrow[swap]{rd}{i_k}
	&  &  L_R(\mathcal E, \mathcal L, \mathcal B_{k+1}) \arrow{ld}{i_{k+1}} \\
		& L_R(\mathcal E, \mathcal L, \mathcal B)
	\end{tikzcd}\]

    Every element of $\mathcal B$ is in some $\mathcal B_k$ and all $L_R(\mathcal E, \mathcal L, \mathcal B_k)$ contain $\{s_a, s_a^{\ast}\}$. The union of the generating sets at each level $\{p_B\}_{B \in \mathcal B_k} \cup \{s_a, s_a^{\ast}\}$ is the generating set of $L_R(\mathcal E, \mathcal L, \mathcal B)$, hence \[L_R(\mathcal E, \mathcal L, \mathcal B) \cong \varinjlim L_R(\mathcal E, \mathcal L, \mathcal B_k) \cong \varinjlim M_{n_k}(R[z, z^{-1}])\] where we calculate the connecting maps as \[\beta_k = \varphi_{k+1} \circ \pi_k \circ \varphi_k^{-1}.\]
	\end{proof}

	\begin{remark}
	Note that the identity is identified with the same element for all $M_{n_k}(R[z, z^{-1}])$ and $L_R(\mathcal E, \mathcal L, \mathcal B)$. Hence, we write $1$ to refer to the identity in any of $M_{n_k}(R[z, z^{-1}])$ or $L_R(\mathcal E, \mathcal L, \mathcal B)$ and this notation is consistent.
	\end{remark}

    \begin{corollary}\cite[cf. Theorem~3]{bunce1975family} \label{notlocallymatricialtheorem}
    For any field $K$, $B_K(\{n_k\})$ is not locally matricial.
    \end{corollary}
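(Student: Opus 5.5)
We argue by contradiction: a locally matricial algebra cannot contain an element that is transcendental over $K$, while $B_K(\{n_k\})$ plainly contains one. The natural candidate is $s_a$. Under the isomorphism $\varphi_k$ constructed in the proof of Theorem~\ref{matrixalgebrarep}, $s_a^{n_k}$ corresponds to $zI_{n_k}$ in $M_{n_k}(K[z,z^{-1}])$, and $\varphi_k$ is injective.

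First, suppose $B_K(\{n_k\}) \cong \varinjlim A_j$, where each $A_j$ is a finite direct product of full matrix algebras over $K$. Each such $A_j$ is finite dimensional over $K$. Its image in the limit, being a homomorphic image of $A_j$, is therefore a finite-dimensional $K$-subalgebra, and every element of the limit lies in one of these images. Since any element of a finite-dimensional algebra is algebraic, every element $x \in B_K(\{n_k\})$ satisfies a nonzero polynomial $q \in K[t]$ with $q(x) = 0$.

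Second, apply this to $x = s_a$. Note that the unital isomorphism $B_K \cong$ limit sends $1$ to $1$, so constant terms of $q$ cause no trouble. Multiplying $q$ by a suitable polynomial, we may assume $q(t) = h(t^{n_k})$ with $h \in K[u]$ nonzero. One way to see this: $K[s_a]$ is finite dimensional, so $s_a^{n_k}$ is also algebraic; the cleanest route is simply to apply the algebraicity statement directly to $y = s_a^{n_k}$ instead.

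Third, $y$ lies in the subalgebra $i_k(L_K(\mathcal E,\mathcal L,\mathcal B_k))$, and $i_k$ is injective. Applying $\varphi_k$ gives
\[
h(zI_{n_k}) = h(z)\, I_{n_k} = 0 \quad \text{in } M_{n_k}(K[z,z^{-1}]).
\]
This is impossible, because $z$ is transcendental over $K$ in the Laurent polynomial ring and so $h(z) \neq 0$.

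The only delicate point is the transfer of algebraicity through the direct limit. The connecting maps need not be injective, but images of finite-dimensional algebras remain finite dimensional, so this causes no difficulty. One must also check that the relation $h(y)=0$, which holds in $B_K(\{n_k\})$, pulls back to $L_K(\mathcal E,\mathcal L,\mathcal B_k)$. This follows from the injectivity of $i_k$ established in the proof of Theorem~\ref{matrixalgebrarep}.
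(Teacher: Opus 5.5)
Your argument is correct and follows the paper's overall strategy: every element of a locally matricial algebra is algebraic, so you exhibit a transcendental element via the matrix model. You pick a different witness, and this changes how transcendence is proved.

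The paper works with $s_a$ itself. It rewrites $\sum k_i s_a^i=0$ as $\sum_{j=0}^{n_0-1}s_a^j f_j(z)=0$. It then needs the $\mathbb Z$-grading $z^rE_{ij}\mapsto rn_0+(j-i)$ on $M_{n_0}(K[z,z^{-1}])$ to separate the summands by the residue of their degrees modulo $n_0$. Together with invertibility of $s_a$, this forces every $f_j=0$.

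You instead take $y=s_a^{n_k}$, which $\varphi_k$ sends to the central element $zI_{n_k}$. Transcendence is then immediate from that of $z$ in $K[z,z^{-1}]$, with no grading argument. Nothing is lost, since transcendence of $s_a^{n_k}$ implies that of $s_a$: a relation $q(s_a)=0$ would make $K[s_a]$ finite dimensional and hence $s_a^{n_k}$ algebraic.

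You also make explicit two points the paper covers only with ``viewing all operations as lying within $M_{n_0}(K[z,z^{-1}])$''. First, algebraicity survives a direct limit whose connecting maps may fail to be injective. Second, the relation pulls back along the injective map $i_k$, after which only the homomorphism property of $\varphi_k$ is needed.

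Your constant-term remark is a bit terse. The cleanest way to settle it is to take a linear dependence among $y,y^2,\dots,y^{d+1}$ inside the finite-dimensional image. This gives a nonzero $h$ with $h(0)=0$ and avoids asking whether that image contains $1$.
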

    \begin{proof}
    It is well known that every element of a locally matricial algebra is algebraic over $K$. We will show that the element $s_a$ is transcendental over $K$ when viewing all operations as lying within $M_{n_0}(K[z, z^{-1}])$.

    If $s_a$ is not transcendental, then there exists some non-zero polynomial $\sum_{i=0}^m k_i x^i \in K[x]$ such that $\sum_{i=0}^m k_i s_a^i = 0$. Using $s_a^{n_0} = z I_{n_0}$, we take the indices $i$ modulo $n_0$ and write \[0 = \sum_{i=0}^m k_i s_a^i  = \sum_{j=0}^{n_0-1} s_a^j f_j(z) = 0\] where $f_j \in K[z]$.

    Using the same grading $z^r E_{ij} \mapsto rn_0 + (j-i)$ as before, we see that the degrees of all homogeneous components of $f_j(z) I_{n_0}$ are congruent to $0$ modulo $n_0$ and the degree of $s_a^j$ is $j$. Hence, the homogeneous components of all $\{s^j_a f_j(z)\}_{j=0}^{n_0-1}$ must all have different degrees, so we must have that $s_a^j f_j(z) = 0$ for all $j$. Because $s_a$ is invertible, this can only happen when $f_j(z) = 0$ which can only occur when all $k_i = 0$, so we are done.
    \end{proof}

    \begin{theorem} \label{stablyfinitethm}
    $B_R(\{n_k\})$ is stably finite.
    \end{theorem}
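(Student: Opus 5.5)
Our plan is to combine the direct limit description of Theorem~\ref{matrixalgebrarep} with the standard fact that matrix rings over commutative rings are directly finite.

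\emph{Step 1: reduce to a finite matrix ring.} Fix $n \geq 1$. Direct finiteness only has to be checked on idempotents of $M_n(B_R(\{n_k\}))$. Let $e$ be such an idempotent and let $x, y \in eM_n(B_R(\{n_k\}))e$ satisfy $xy = e$; we must show $yx = e$. The entries of $e$, $x$, $y$ form a finite subset of $B_R(\{n_k\})$. By Theorem~\ref{matrixalgebrarep}, this finite set lies in a subalgebra $A_k \cong M_{n_k}(R[z,z^{-1}])$. Hence $e, x, y$ lie in
\[M_n(A_k) \cong M_{n n_k}(R[z,z^{-1}]).\]
The equations $e^2 = e$, $xe = ex = x$, $ye = ey = y$ and $xy = e$ all hold inside this subring. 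So it suffices to show that for every commutative ring $T$ (here $T = R[z,z^{-1}]$) and every $N$, the ring $M_N(T)$ is directly finite in the corner sense: if $e$ is idempotent and $x, y \in eM_N(T)e$ with $xy = e$, then $yx = e$.

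\emph{Step 2: handle the identity corner.} For $e = 1$ we use the determinant, which is multiplicative over a commutative ring. From $xy = 1$ we get $\det x \det y = 1$, so $\det x$ is a unit. The adjugate formula then makes $x$ two-sided invertible, and therefore $yx = 1$.

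\emph{Step 3: handle a general corner (the main obstacle).} A corner $eM_N(T)e$ need not be a matrix ring, so Step 2 does not apply directly. We plan to reduce to the unital case with the following standard trick. Put
\[x' = x + (1-e), \qquad y' = y + (1-e).\]
Since $x, y \in eM_N(T)e$, we have $x(1-e) = (1-e)x = 0$, and the same holds for $y$. Hence
\[x'y' = xy + (1-e) = e + (1-e) = 1.\]
By Step 2, $y'x' = 1$. Expanding, $y'x' = yx + (1-e)$, so $yx = e$.

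This completes the argument. The only care needed is in Step 1: we must check that the subalgebra produced by Theorem~\ref{matrixalgebrarep} can be chosen to contain all entries at once, and that it is unital with the same unit, which the Remark after that theorem guarantees.
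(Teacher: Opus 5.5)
Your proof is correct, and it reaches the result by a more elementary route than the paper.

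\textbf{The paper's route.} It also starts from Theorem~\ref{matrixalgebrarep}, but then relies entirely on citations. It identifies $R[z,z^{-1}]$ with the Leavitt path algebra of the single loop and applies \cite[Theorem~4.13]{steinberg2025stable}, which says that cycles without exits give stable finiteness. It then asserts that $M_{n_k}(R[z,z^{-1}])$ is ``clearly'' stably finite. Finally it passes to the direct limit with \cite[Proposition~3.2]{steinberg2025stable}.

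\textbf{Your route.} You replace each citation with a direct argument.
\begin{enumerate}
\item The finite-subset form of Theorem~\ref{matrixalgebrarep} does the work of the direct-limit proposition. Only the finitely many entries of $e$, $x$, $y$ matter, and the relations $xy=e$, $x=exe$, $y=eye$, $e^2=e$ all hold inside $M_n(A_k)\cong M_{nn_k}(R[z,z^{-1}])$.
\item The determinant and adjugate show that $1$ is finite in $M_N(T)$ for any commutative ring $T$.
\item The trick $x+(1-e)$, $y+(1-e)$ transfers this to an arbitrary idempotent. In the paper this step is absorbed into the cited results.
\end{enumerate}
Your appeal to the Remark about a common unit is harmless but not needed. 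The trick works with the unit of $M_n(A_k)$ itself, and the conclusion $yx=e$ then holds in $M_n(B_R(\{n_k\}))$.

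\textbf{What each approach buys.} Yours is self-contained and more general. It shows that the ``no exit'' condition plays no real role here, since every commutative ring is stably finite. Hence any ring that is a directed union of unital subrings of the form $M_N(T)$, with $T$ commutative, is stably finite. The paper's version is shorter on the page and stays within the Leavitt path algebra framework. It needs \cite[Theorem~4.13]{steinberg2025stable} anyway for Lemma~\ref{leavittconditionsall} in the final section.
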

    \begin{proof}
    By Theorem~\ref{matrixalgebrarep}, $B_R(\{n_k\})$ can be written as the direct limit of rings $M_{n_k}(R[z, z^{-1}])$. 
    
    It is well known \cite[Proposition~1.3.4]{Abrams2017} that $R[z, z^{-1}]$ is the Leavitt path algebra of the single vertex self-cycle graph $R_1$.
    
    Every cycle of this graph clearly has no exit, so we apply \cite[Theorem~4.13]{steinberg2025stable} to deduce that $R[z, z^{-1}]$ is stably finite. Clearly then, $M_{n_k}(R[z, z^{-1}])$ is also stably finite. Hence, by \cite[Proposition~3.2]{steinberg2025stable}, we conclude that $B_R(\{n_k\})$ is stably finite.
    \end{proof}

    \begin{theorem} \label{notvonneumannregular}
	$B_R(\{n_k\})$ is not von Neumann regular.
	\end{theorem}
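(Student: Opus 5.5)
We exhibit an explicit non-regular element and use the direct-limit description of Theorem~\ref{matrixalgebrarep} to reduce to a question about matrices over Laurent polynomials. The candidate element is $x = 1 - s_a$, or a close variant like $1+s_a$. If $R$ is the zero ring everything is trivial, so we assume $R \neq 0$ and fix a maximal ideal $\mathfrak m$ with residue field $K = R/\mathfrak m$.

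\textbf{Step 1: reduce to a single matrix level.} Suppose $x b x = x$ for some $b \in B_R(\{n_k\})$. By Theorem~\ref{matrixalgebrarep}, the finite set $\{x,b\}$ lies in some copy of $M_{n_k}(R[z,z^{-1}])$. The maps $i_k$ are injective, so the identity $xbx=x$ already holds in $M_{n_k}(R[z,z^{-1}])$. Under $\varphi_k$, the element $s_a$ becomes the companion-type matrix
\[S = E_{(0,1)} + \cdots + E_{(n_k-2,n_k-1)} + zE_{(n_k-1,0)},\]
so $x$ becomes $I - S$. Reducing coefficients modulo $\mathfrak m$ gives a ring homomorphism $M_{n_k}(R[z,z^{-1}]) \to M_{n_k}(K[z,z^{-1}])$. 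Hence it suffices to show that $I-S$ is not regular in $M_{n_k}(K[z,z^{-1}])$.

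\textbf{Step 2: regularity over a PID.} The ring $D = K[z,z^{-1}]$ is a PID. A matrix $M$ over a PID is von Neumann regular if and only if its Smith normal form has diagonal entries that are units or zero. Equivalently, $\operatorname{coker} M$ is projective, i.e.\ $D$-free. We compute $\det(I - S)$ by expanding along the cyclic structure, or by noting that $S^{n_k} = zI$ so the eigenvalues of $S$ are the $n_k$-th roots of $z$. This gives
\[\det(I-S) = 1 - z \quad (\text{up to sign}).\]
This determinant is nonzero and not a unit in $D$. Therefore $I-S$ is injective on $D^{n_k}$ but not surjective. Its cokernel is a nonzero module killed by $1-z$, hence torsion and not free. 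So $I-S$ has a Smith normal form entry that is a nonzero nonunit, and it is not regular.

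For a direct argument avoiding Smith normal form: from $MBM = M$ with $M$ a non-zero-divisor (since $\det M \neq 0$), cancelling on the left gives $BM = I$. Taking determinants, $\det M$ would be a unit, contradicting $\det M = 1-z$.

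\textbf{Main obstacle.} The reduction is the delicate part. One must make sure that a witness $b$ found in the limit yields a witness at some finite level. This holds because the $i_k$ are injective and every element lies in some $M_{n_k}$, so no passing to a larger $k$ is needed. One must also check that $\varphi_k(s_a)$ is indeed $S$ at that level. The determinant computation is routine. The cancellation step only uses that $\det M$ is a non-zero-divisor in the domain $D$, which is why we pass to a residue field (or simply to $R$ a domain). Over a general $R$, the reduction modulo $\mathfrak m$ is what makes the argument work.
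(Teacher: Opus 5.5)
Your proof is correct, but it takes a different route from the paper's.

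\textbf{The paper's route.} The paper never leaves $B_R(\{n_k\})$. It takes $x = 1+s_a$ and uses only the canonical $\mathbb Z$-grading of the Leavitt labelled path algebra, together with the fact that $s_a$ is a homogeneous unit of degree $1$. Comparing lowest-degree components shows that $w(1+s_a)=0$ forces $w=0$, so $xyx=x$ gives $xy=1$. The lowest and highest nonzero components of $xy$ are then $y_m$ and $s_a y_n$, in the distinct degrees $m$ and $n+1$. Hence $xy$ cannot equal the degree-$0$ element $1$.

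\textbf{Your route.} You go through Theorem~\ref{matrixalgebrarep}, including injectivity of the $i_k$, to the concrete model $M_{n_k}(R[z,z^{-1}])$. There the determinant $\det(I-S)=1-z$ does the work. It is a non-zero-divisor, which gives the cancellation $BM=I$. It is also a non-unit, which rules out that identity.

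\textbf{Comparison.} The paper's argument is shorter and needs only the grading. Yours depends on the heavier structure theorem, but it exhibits a concrete obstruction. Your Smith normal form remark also characterizes exactly which matrices over $K[z,z^{-1}]$ are regular. The argument works verbatim for $1+s_a$ as well, since $\det(I+S)=1-(-1)^{n_k}z$.

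\textbf{Two small remarks.} First, passing to a residue field is a convenience, not a necessity. The element $1-z$ is already a non-zero-divisor and a non-unit in $R[z,z^{-1}]$ for every nonzero $R$, so the adjugate argument runs directly over $R$. Second, $R=0$ is not a ``trivial case'' of the theorem but a genuine exception: then $B_R(\{n_k\})=0$, which is von Neumann regular. So $R\neq 0$ is an implicit hypothesis, in the paper as well.
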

	\begin{proof}
	Let $L_R(\mathcal E, \mathcal L, \mathcal B)$ be our associated Leavitt labelled path algebra. Let $a \in \mathcal A$ be the single symbol and recall the $\mathbb Z$-grading on a Leavitt labelled path algebra with $s_a \mapsto 1$, $s_a^{\ast} \mapsto -1$, and $p_B \mapsto 0$ for all $B \in \mathcal B$.

	We will prove that the element $x = 1+s_a$ is not von Neumann regular. Note that $s_a^{\ast}s_a = s_a s_a^{\ast} = 1$ so $s_a$ is a unit.
	
	Assume for the sake of contradiction that there exists $y \in L_R(\mathcal E, \mathcal L, \mathcal B)$ such that $xyx = x \Rightarrow (xy-1)x = 0$. A grading argument easily shows that $1+s_a$ has no annihilators, so we get that $xy = 1$. 

	To show that this is impossible, let $y_m$ and $y_n$ with $m \leq n$ be the non-zero homogeneous components of $y$ of smallest and largest degree, respectively. Because $s_a$ is a unit, we have that $s_ay_n \neq 0$. Hence, $y_m$ and $s_a y_n$ are the respective smallest and largest non-zero homogeneous components of $xy$ with degrees $m$ and $n+1$ respectively. However, if $xy = 1$, then we must have that $m = n+1$, which is impossible because $m \leq n$, so we are done.
	\end{proof}

    \section{An Application to Leavitt Path Algebras}
    In this section, we will prove that, for any field $K$, $B_K(\{n_k\})$ is not Morita equivalent to any Leavitt path algebra $L_R(G)$ for any graph $G$ and commutative unital ring $R$. We will only discuss the necessary theorems for Leavitt path algebras needed to prove our primary application. For a full overview of Leavitt path algebras, see \cite{Abrams2017}.

    \begin{lemma}\label{leavittconditionsall}
    For a directed graph $G$ and unital commutative ring $R$, if $L_R(G)$ is simple and stably finite, then $L_R(G)$ is von Neumann regular.
    \end{lemma}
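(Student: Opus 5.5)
We first use the simplicity of $L_R(G)$ to force $R$ to be a field: if $I$ is a proper nonzero ideal of $R$, then $I L_R(G)$ is a two-sided ideal. It is nonzero, because $L_R(G)$ is a free $R$-module with a basis containing the vertices. It is proper, since the quotient is $L_{R/I}(G)\neq 0$. So simplicity of $L_R(G)$ implies $R=K$ is a field (this is the standard characterization; cf. \cite{Abrams2017}). From here on we work with $L_K(G)$.

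Next we invoke the simplicity theorem for Leavitt path algebras over a field: $L_K(G)$ is simple if and only if the only hereditary saturated subsets of $G^0$ are $\emptyset$ and $G^0$, and every cycle has an exit (Condition (L)) \cite{Abrams2017}. We then use stable finiteness, which implies in particular direct finiteness. Suppose $G$ has a cycle $c$ based at $v$. By Condition (L), $c$ has an exit $f$ at some vertex. Rotating $c$ to start at $s(f)$, we may assume the exit leaves from the base vertex $v$. Set $x=c$ and $y=c^{\ast}$ in the corner $vL_K(G)v$. Then $c^{\ast}c=v$, while $cc^{\ast}\neq v$. The latter holds because $v - cc^{\ast}\geq ff^{\ast}\neq 0$ in the idempotent order: the first edge $e_1$ of $c$ differs from $f$, and $v\geq e_1e_1^{\ast}+ff^{\ast}$ with $cc^{\ast}\leq e_1e_1^{\ast}$. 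So the idempotent $v$ is infinite, contradicting direct finiteness. (Alternatively, we could cite the characterization in \cite{steinberg2025stable} that $L_R(G)$ is stably finite iff no cycle has an exit, as was used in the proof of Theorem~\ref{stablyfinitethm}.) Hence $G$ is acyclic.

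Finally, we need that the Leavitt path algebra of an acyclic graph over a field is von Neumann regular. By the theorem of Abrams and Rangaswamy \cite{Abrams2017}, $L_K(G)$ is von Neumann regular if and only if $G$ is acyclic. Alternatively, using simplicity and acyclicity, $L_K(G)$ is locally matricial: it is a direct limit of finite-dimensional semisimple algebras $\bigoplus M_{n}(K)$, and is therefore regular. This completes the argument.

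The main obstacle is the exit argument in the second paragraph. The exit may leave the cycle at a vertex other than the chosen base, so we must rotate the cycle, and we must justify that $cc^{\ast}\neq v$ via the nonzero orthogonal idempotent $ff^{\ast}$; for the latter we rely on the standard fact that $ff^{\ast}\neq 0$ in $L_K(G)$. One must also be careful whether \emph{directly finite} in the paper, meaning that every idempotent $e$ is finite, is applied to the vertex idempotent $v$ itself. It is, since $v$ is an idempotent of $L_K(G)$.
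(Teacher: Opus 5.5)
Your proposal is correct and follows the same route as the paper: $R$ must be a field, simplicity gives Condition (L), finiteness rules out cycles with exits, so $G$ is acyclic and the Abrams--Rangaswamy theorem gives von Neumann regularity. The only differences are that you give a short base-change argument where the paper cites Larki, and you prove directly the necessity direction of Steinberg's criterion that the paper cites, via a base vertex $v$ of the cycle with $c^{\ast}c=v\neq cc^{\ast}$; this also shows that direct finiteness alone already suffices.
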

    \begin{proof}
    In order for $L_R(G)$ to be simple, we know that $R$ must be a field \cite[Proposition~6.2]{larki2015ideal}. Let $K = R$ be this field.

    By \cite[Theorem~2.9.1]{Abrams2017}, $L_K(G)$ is simple only if every cycle in $G$ has an exit. By \cite[Theorem~4.13]{steinberg2025stable}, $L_K(G)$ is stably finite only if every cycle of $G$ has no exit. Hence, $L_K(G)$ is simple and stably finite only if every cycle in $G$ must both have and not have an exit, which only happens when $G$ is acyclic. The fact that acyclicity of $G$ implies von Neumann regularity of $L_K(G)$ is well known \cite[Theorem~3.4.1]{Abrams2017}.
    \end{proof}

    \begin{lemma}\label{moritaequivalenceall}
    The following properties are invariant under Morita equivalence for rings with local units:
    \begin{enumerate}
    \item Simplicity
    \item Von Neumann regularity
    \item Stable finiteness
    \end{enumerate}
    \end{lemma}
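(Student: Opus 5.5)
The plan is to work with the Morita context characterization of Morita equivalence for rings with local units (Ánh--Márki). If $A \equiv_M B$, there is a surjective Morita context $(A,B,M,N,\phi,\psi)$ with $M$ a unitary $A$-$B$ bimodule, $N$ a unitary $B$-$A$ bimodule, and the pairings $\phi:M\otimes_B N\to A$, $\psi:N\otimes_A M\to B$ surjective. Equivalently, $B$ is isomorphic to a corner-type ring $eM_\infty(A)e$ in a suitable sense. Concretely, every finite subset of $B$ lies in a ring of the form $fM_n(A)f$ for some idempotent $f\in M_n(A)$ (using local units), and symmetrically for $A$. I will reduce each property to behaviour under passing to full corners $fM_n(A)f$ and matrix rings.

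\emph{Simplicity.} Ideals correspond under Morita equivalence, since the lattice of two-sided ideals of $A$ is isomorphic to the lattice of subbimodules of $A$, i.e.\ of unitary $A$-bimodules inside $A$. This is preserved by the equivalence $A\text{-Mod-}A\simeq B\text{-Mod-}B$ induced by the context, via the maps $I\mapsto NIM$ and $J\mapsto MJN$. Surjectivity of the pairings shows these maps are mutually inverse. So $A$ is simple iff $B$ is.

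\emph{Von Neumann regularity.} I would use the module-theoretic characterization: a ring with local units is von Neumann regular iff every unitary left module is flat, or equivalently iff every finitely generated submodule of a finitely generated projective unitary module is a direct summand. Both notions (flatness, and the lattice of finitely generated projectives with direct summands) are categorical, so they transfer. As an alternative, I would check directly that regularity passes to $M_n(A)$ (a standard result) and to corners $fRf$ (if $x=fxf$ and $xyx=x$, then $x(fyf)x=x$), and then to directed unions. The local description of $B$ via corners of matrix rings over $A$ gives the claim.

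\emph{Stable finiteness.} Using the same local description, I note the following. Direct finiteness of $M_m(B)$ means every idempotent of $M_m(B)$ is finite. Any element of $M_m(B)$ involves finitely many entries, which lie in some $fM_n(A)f$, so $M_m(B)$ is covered by corners $gM_{mn}(A)g$. If $e$ is an idempotent and $x,y\in eM_m(B)e$ satisfy $xy=e$, then everything lives in $e\,M_{mn}(A)\,e$. There $e$ is a finite idempotent of $M_{mn}(A)$ by stable finiteness of $A$, so $yx=e$. Symmetry finishes the argument.

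The main obstacle is making the local description "every finite subset of $B$ lies in $fM_n(A)f$ as a subring, compatibly with multiplication" rigorous for non-unital rings with local units. I would get it from the surjective Morita context. For a local unit $u\in B$, write $u=\sum_{i=1}^n\psi(n_i\otimes m_i)$, and define $f=(\phi(m_i\otimes n_j))_{i,j}\in M_n(A)$. Then $f$ is an idempotent, and $uBu\cong fM_n(A)f$ via $b\mapsto(\phi(m_i\otimes b n_j))_{i,j}$. Checking that this is a ring isomorphism is the key computation.
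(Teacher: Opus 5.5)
Your proposal is correct in substance but takes a different route from the paper. The paper handles (1) and (2) by citing \'Anh--M\'arki (Propositions 3.1 and 3.3). For (3) it invokes a reduction theorem from the cited Morita paper (Theorem~2.2 there): for rings with local units, it suffices that the property passes to matrix rings $M_m(A)$ and to corners $eAe$. The paper then checks these two closure properties via $M_n(M_m(A))\cong M_{nm}(A)$ and $M_n(eAe)=(eI_n)M_n(A)(eI_n)$.

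You instead start from a surjective Morita context and prove a local structure result: $uBu\cong fM_n(A)f$ for every local unit $u$ of $B$. You then verify regularity and direct finiteness of $M_m(B)$ elementwise inside the corners $M_m(uBu)\cong gM_{mn}(A)g$. This has three advantages:
\begin{itemize}
\item it is self-contained;
\item it treats (2) and (3) by a single mechanism;
\item it only ever uses finite matrices over $A$, so it does not depend on the precise form of a black-box reduction theorem.
\end{itemize}
The paper's version is much shorter and modular: any further property only needs the two closure checks.

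One computation needs a small repair. With $f_{ij}=\phi(m_i\otimes n_j)$, the context identity $\phi(m\otimes n)m'=m\psi(n\otimes m')$ gives
\[(f^2)_{ij}=\sum_k\phi(m_i\otimes n_k)\phi(m_k\otimes n_j)=\phi(m_iu\otimes n_j).\]
This is not $f_{ij}$ in general, and your map sends $u$ to $(\phi(m_i\otimes un_j))_{i,j}$ rather than to $f$.

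The fix is to first replace $m_i$ and $n_i$ by $m_iu$ and $un_i$. This is allowed because $u=uuu=\sum_i\psi(un_i\otimes m_iu)$. After this normalization:
\begin{itemize}
\item $f$ is idempotent;
\item $\theta(b)=(\phi(m_i\otimes bn_j))_{i,j}$ satisfies $\theta(b)\theta(c)=\theta(buc)=\theta(bc)$ for $b,c\in uBu$, and $\theta(u)=f$;
\item $X\mapsto\sum_{i,j}\psi(n_i\otimes X_{ij}m_j)$ is an inverse $fM_n(A)f\to uBu$.
\end{itemize}
With this repair, the remainder of your argument goes through as written.
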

    \begin{proof}
    (1) and (2) follow directly from \cite[Proposition~3.1 and Proposition~3.3]{AnhMoritaEquivalenceLocal}. 
    
    To prove (3), we will use \cite[Theorem~2.2]{molina2016morita}, which states that, for rings with local units, it suffices to prove that stable finiteness is preserved under corners and taking matrices. Let $A$ be a stably finite ring with local units. Clearly, $M_m(A)$ is stably finite because for any $n \geq 1$ we have that $M_n(M_m(A)) \cong M_{nm}(A)$ which is directly finite because $A$ is stably finite. To see that stable finiteness is preserved under corners, let $e \in A$ be an idempotent and consider $M_n(eAe) = (eI_n)(M_n(A))(eI_n)$. Because $A$ is stably finite, $M_n(A)$ is directly finite. $eI_n$ is clearly an idempotent, so by \cite[Proposition~3.1]{steinberg2025stable} we have that every idempotent in $(eI_n)(M_n(A))(eI_n)$ is finite. But that just means that $M_n(eAe)$ is directly finite, so we are done.

    \end{proof}

    \begin{theorem}
    There exists no unital commutative ring $R$, directed graph $G$, and field $K$ such that 
    \[L_R(G) \equiv_M B_K(\{n_k\}).\] Namely, the class of Leavitt labelled path algebras is strictly larger than the class of Leavitt path algebras, even up to Morita equivalence.
    \end{theorem}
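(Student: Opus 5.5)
We argue by contradiction, combining Lemma~\ref{leavittconditionsall} and Lemma~\ref{moritaequivalenceall} with the properties of $B_K(\{n_k\})$ proved in Section~\ref{propertiessection}. Suppose there were a unital commutative ring $R$, a directed graph $G$, and a field $K$ with $L_R(G) \equiv_M B_K(\{n_k\})$. Both algebras are rings with local units: $B_K(\{n_k\})$ is even unital, and Leavitt path algebras always have local units. Hence the notion of Morita equivalence from the definition above applies, and so does Lemma~\ref{moritaequivalenceall}.

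First, Theorem~\ref{simplicitytheorem} with $R = K$ a field shows that $B_K(\{n_k\})$ is simple. Theorem~\ref{stablyfinitethm} shows that it is stably finite. Transporting both properties across the Morita equivalence by parts (1) and (3) of Lemma~\ref{moritaequivalenceall}, we get that $L_R(G)$ is simple and stably finite. Lemma~\ref{leavittconditionsall} then implies that $L_R(G)$ is von Neumann regular. Applying part (2) of Lemma~\ref{moritaequivalenceall} in the reverse direction (Morita equivalence is symmetric), $B_K(\{n_k\})$ would be von Neumann regular. This contradicts Theorem~\ref{notvonneumannregular}, which holds for every commutative unital ring and in particular for $K$.

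For the final sentence of the statement, note that $B_K(\{n_k\})$ is by definition a Leavitt labelled path algebra. Conversely, every Leavitt path algebra $L_R(G)$ is a Leavitt labelled path algebra: label each edge by itself and take $\mathcal B$ to be the finite subsets of vertices, which is the standard embedding from \cite{Boava2021LeavittPA}. The class of Leavitt labelled path algebras therefore contains every Leavitt path algebra, and by the first part it also contains an algebra, $B_K(\{n_k\})$, that is not Morita equivalent to any of them. So the containment is strict, even up to Morita equivalence.

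The argument is essentially bookkeeping, so I expect no real obstacle. The one point needing care is that Lemma~\ref{moritaequivalenceall} requires both rings to have local units, and that invariance must be used in both directions. The stable finiteness part of that lemma is where the real work went, and it is taken as given here. I would also note that the contradiction never uses any property of $R$ directly: simplicity forces $R$ to be a field inside Lemma~\ref{leavittconditionsall}.
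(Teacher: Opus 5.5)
Your proposal is correct and follows the paper's own proof: simplicity and stable finiteness of $B_K(\{n_k\})$ transfer across the Morita equivalence to $L_R(G)$, Lemma~\ref{leavittconditionsall} then forces $L_R(G)$ to be von Neumann regular, and that property transfers back to contradict Theorem~\ref{notvonneumannregular}. You also justify the ``strictly larger'' clause by realizing $L_R(G)$ as a Leavitt labelled path algebra (identity labelling, $\mathcal B$ the finite vertex sets), a point the paper leaves implicit.
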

    
    \begin{proof}
    By the results in Section~\ref{propertiessection}, we know that $B_K(\{n_k\})$ is simple, stably finite, and not von Neumann regular. By Lemma~\ref{leavittconditionsall}, we know that every Leavitt path algebra that is simple and stably finite must be von Neumann regular. Hence, by Lemma~\ref{moritaequivalenceall}, $B_K(\{n_k\})$ cannot be Morita equivalent to any Leavitt path algebra, so we are done.
    \end{proof}

	\section*{Declarations}
	During the preparation of this work, the author(s) used Copilot and ChatGPT for generating several proof ideas, collecting references, and proofreading the manuscript. The author(s) reviewed and edited the output as needed and take full responsibility for the content of the published article. 
	\bibliographystyle{abbrv}
	\bibliography{LabelledMoritaEquiv.bib}
	
\end{document}